\newtheorem{thm}{Theorem}[section]
\newcommand{\bthm}{\begin{thm}}
\newcommand{\ethm}{\end{thm}}
\newtheorem{theorem}[equation]{Theorem}
\newtheorem{lemma}[equation]{Lemma}
\numberwithin{equation}{section}
\theoremstyle{definition}
\newtheorem{definition}[equation]{Definition}
\newtheorem{remark}[equation]{Remark}
\newtheorem{example}[equation]{Example}
\newcommand{\be}{\begin{equation}}
\newcommand{\ee}{\end{equation}}
\DeclareMathOperator{\diam}{diam}
\newcommand{\ds}{\displaystyle}
\newcommand{\comment}[1]{}
\newcounter{minutes}\setcounter{minutes}{\time}
\newcounter{hours}\setcounter{hours}{\time}
\begin{document}

\title[Freely quasiconformal maps in Banach spaces]
{Quasiconformal maps with bilipschitz or identity boundary values in Banach spaces}
\author[]{Y. Li}
\address{Yaxiang. Li, Department of Mathematics,
Hunan Normal University, Changsha,  Hunan 410081, People's Republic
of China} \email{yaxiangli@163.com}
\author[]{M. Vuorinen}
\address{Matti. Vuorinen, Department of Mathematics and Statistics, University of Turku,
FIN-20014 Turku, Finland}
\email{vuorinen@utu.fi}

\author{X. Wang
${}^{~\mathbf{*}}$}
\address{Xiantao. Wang, Department of Mathematics,
Hunan Normal University, Changsha,  Hunan 410081, People's Republic
of China} \email{xtwang@hunnu.edu.cn}


\date{}
\subjclass[2010]{Primary: 30C65, 30F45; Secondary: 30C20}
\keywords{Uniform domain, FQC map, quasisymmetric, bilipschitz boundary values, H\"older condition.\\
${}^{\mathbf{*}}$ Corresponding author\\
The research was supported by the Academy of Finland, Project 2600066611, and by NSF of
China (No. 11071063) and Hunan Provincial Innovation Foundation For
Postgraduate.}



\def\thefootnote{}
\footnotetext{ \texttt{\tiny File:~\jobname .tex,
          printed: \number\year-\number\month-\number\day,
          \thehours.\ifnum\theminutes<10{0}\fi\theminutes }
} \makeatletter\def\thefootnote{\@arabic\c@footnote}\makeatother

\begin{abstract}
Suppose that $E$ and $E'$ denote real Banach spaces with dimension
at least $2$ and that $D\varsubsetneq E$ and $D'\varsubsetneq E'$ are uniform domains with homogeneously dense boundaries. We consider the class of all $\varphi$-FQC (freely $\varphi$-quasiconformal) maps of $D$ onto $D'$ with bilipschitz boundary values. We show that the maps of this class are $\eta$-quasisymmetric.   As an application, we show that if $D$ is bounded, then maps of this class satisfy a two sided H\"older condition. Moreover, replacing the class $\varphi$-FQC by the smaller class of $M$-QH maps, we show that $M$-QH maps with bilipschitz boundary values are bilipschitz.
Finally, we show that if $f$ is a $\varphi$-FQC map which maps $D$ onto itself with identity boundary values, then there is a constant $C\,,$ depending only on the function $\varphi\,,$ such that for all $x\in D$, the quasihyperbolic distance satisfies $k_D(x,f(x))\leq C$.
\end{abstract}

\maketitle\pagestyle{myheadings} \markboth{}{Quasiconformal maps with bilipschitz or identity boundary values in Banach spaces}


\section{Introduction and main results}
\label{intro}

Many results of classical function theory have their counterparts in the context of quasiconformal maps in the Euclidean $n$-dimensional space $\mathbb{R}^n$.  J. V\"ais\"al\"a \cite{Vai6-0, Vai6, Vai5}
has developed a theory of quasiconformality in the Banach space case
which
differs from the finite dimensional theory in many respects because tools such as conformal invariants and measures of sets are no longer available. These classical tools are replaced by
fundamental objects from metric space geometry such as curves, their lengths, and approximately length minimizing curves.
 V\"ais\"al\"a used these notions in the setup of several
 metric space structures on the same underlying Banach space and
developed effective methods based on these basic notions. In addition to the norm metric
 he considered two hyperbolic type metric structures,
 the quasihyperbolic metric and the distance ratio metric. The quasihyperbolic metric $k_D$ of a domain $D$ has a key role as quasiconformality is defined in terms of it in the Banach space case. Only recently some basic properties of quasihyperbolic
metric have been studied: the convexity of quasihyperbolic balls was
studied by R. Kl\'en \cite{k,k2}, A. Rasila and J. Talponen  \cite{rt,krt}, V\"ais\"al\"a \cite{Vai6'}. Rasila and Talponen
also proved the smoothness of quasihyperbolic geodesics in \cite{rt2}
applying now stochastic methods.

Given domains $D, D'$ in Banach spaces $E$ and $E'$, respectively, our basic problem is to study the class of homeomorphisms $f\in QC^L_{\varphi}(D,D')$, where
\begin{equation}\label{intro-eq-1}
QC^L_{\varphi}(D,D') = \{f: \overline{D}\to \overline{D}'\; { \rm homeo}\;\Big| f|_{D} \;{\rm is}\; {\rm a}\;\varphi{\rm -FQC}\;{\rm map}\; {\rm and}\; f|_{\partial D}\; {\rm is}\; L{\rm -bilipschitz}\}\, .
\end{equation}
For the definition of $\varphi$-FQC and $L$-bilipschitz maps see Section \ref{sec-2}. The class $QC^L_{\varphi}(D,D')$ is very wide  and many particular cases of interest are obtained by choosing $D, D', \varphi, L$ in a suitable way as we will see below.

Our first result deals with the case when both $D$ and $D'$ are uniform domains. 
In this case we prove that the class \eqref{intro-eq-1} consists of quasisymmetric maps. More precisely, we prove the following theorem.

\begin{theorem}\label{thm1.2}
Let $D\subsetneq E$, $D'\subsetneq E'$ be $c$-uniform domains. If $f\in QC^L_{\varphi}(D,D')$,
then $f$ is $\eta$-QS in $\overline{D}$ with $\eta$ depending on $c$, $L$ and $\varphi$ only.
\end{theorem}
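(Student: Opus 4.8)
The plan is to establish the metric $\eta$-condition on $\overline{D}$ directly, exploiting throughout the symmetry of the class: since inverses of $\varphi$-FQC maps are $\varphi'$-FQC with $\varphi'$ determined by $\varphi$, and inverses of $L$-bilipschitz boundary maps are again $L$-bilipschitz, we have $f^{-1}\in QC^{L}_{\varphi'}(D',D)$, so every estimate proved for $f$ applies verbatim to $f^{-1}$ with constants depending only on $c,L,\varphi$. The first ingredient is purely interior. Because $D$ and $D'$ are $c$-uniform, the quasihyperbolic and distance-ratio metrics are comparable, $j_D(x,y)\le k_D(x,y)\le c\,j_D(x,y)+c$, and the defining inequality $k_{D'}(fx,fy)\le\varphi(k_D(x,y))$ (together with its inverse) then yields, after a Harnack-type comparison of distances to the boundary (valid for FQC maps, since a bounded quasihyperbolic distance forces comparable values of $d_D$), a \emph{relative} quasisymmetry: the ratio $|fx-fa|/d_{D'}(fx)$ is controlled by a function of $|x-a|/d_D(x)$, with control depending only on $c$ and $\varphi$. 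This step is essentially bookkeeping with the defining inequalities and the uniformity characterization.

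The decisive step is to convert this distance-to-boundary form of quasisymmetry into the genuine metric form, and this is where the $L$-bilipschitz boundary values enter. The key lemma I would establish is that $f$ roughly preserves the distance to the boundary at every scale: for each $x\in D$ and a nearest boundary point $\xi\in\partial D$ (so $|x-\xi|=d_D(x)$) one has $|f(x)-f(\xi)|\aeq d_{D'}(f(x))$, with constants depending on $c,L,\varphi$ only. One inequality is immediate, $d_{D'}(f(x))\le|f(x)-f(\xi)|$ since $f(\xi)\in\partial D'$; for the reverse I would join $x$ to $\xi$ by a curve furnished by the cigar/quasiconvexity condition of the $c$-uniform domain, run the $\varphi$-FQC quasihyperbolic estimate along a near-geodesic approaching $\xi$, and compare the resulting boundary displacement in $D'$ with the $L$-bilipschitz stretching of $\partial D$ near $\xi$. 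Intuitively, the bilipschitz boundary fixes the local scale at each boundary point, uniformity transports that scale a bounded quasihyperbolic distance into the interior, and the FQC property guarantees the transport is controlled. I expect this propagation of the boundary scale into the interior to be the main obstacle, precisely because it is the point at which all three hypotheses must be made to interact; that $\eta$ must depend on $L$ and not on $\varphi$ alone is illustrated by M\"obius self-maps of a ball, which share a common $\varphi$ but have quasisymmetry constants growing with the bilipschitz constant of their boundary restrictions.

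With the boundary-scale lemma in hand, I would assemble the metric estimate. Given distinct $x,a,b\in D$, use the lemma to replace $d_{D'}(fx)$, $d_{D'}(fa)$, $d_{D'}(fb)$ by image distances to nearby boundary points, use the $L$-bilipschitz boundary map to compare those with the corresponding distances in $\partial D$, and feed everything into the relative quasisymmetry of the first step; the distance-to-boundary factors then cancel in a controlled way, leaving $|fx-fa|/|fx-fb|\le\eta_0(|x-a|/|x-b|)$ for interior triples, with $\eta_0$ depending only on $c,L,\varphi$. Finally I would pass to $\overline{D}$: for triples involving boundary points the bound follows by continuity of $f$ on $\overline{D}$ from the interior case, while for triples entirely in $\partial D$ it is immediate from the $L$-bilipschitz boundary values. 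Carrying out the same argument for $f^{-1}$ (legitimate by the symmetry noted above), and, should the combined estimate only yield the two-point condition, invoking the standard upgrade from weak quasisymmetry to the full $\eta$-condition on connected spaces, produces an $\eta$ depending only on $c$, $L$ and $\varphi$, as required.
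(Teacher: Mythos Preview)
Your route is genuinely different from the paper's, and the paper's argument is much shorter. The paper does not attempt to build the quasisymmetry directly. Instead it invokes V\"ais\"al\"a's theorem that a $\varphi$-FQC map between $c$-uniform domains extends to a $\theta$-quasim\"obius homeomorphism of the closures (their Lemma~\ref{pre-lem-3}), and then upgrades quasim\"obius to quasisymmetric using only the bilipschitz boundary data: in the unbounded case one normalizes so that $f(\infty)=\infty$, whence QM implies QS; in the bounded case the $L$-bilipschitz boundary lets one pick three boundary points that are $\lambda$-separated in both $\overline{D}$ and $\overline{D'}$ (with $\lambda=\lambda(L)$), and then the standard three-point criterion (their Lemma~\ref{pre-lem-2}) gives $\eta$-QS with $\eta=\eta(c,L,\varphi)$. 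No boundary-scale lemma, no interior-to-boundary propagation, and no HD assumption is needed.

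Your direct approach is not obviously wrong, but the sketch has a real gap at the ``cancellation'' step. The relative quasisymmetry you obtain says that $|f(x)-f(a)|/d_{D'}(f(x))$ is controlled by a function of $|x-a|/d_D(x)$; dividing two such estimates yields a bound for $|f(x)-f(a)|/|f(x)-f(b)|$ in terms of the pair $\bigl(|x-a|/d_D(x),\,|x-b|/d_D(x)\bigr)$, not in terms of the single ratio $|x-a|/|x-b|$, so the $d_D(x)$ factors do \emph{not} cancel in general. Your boundary-scale lemma $|f(x)-f(\xi)|\aeq d_{D'}(f(x))$ does not repair this: it relates $d_{D'}(f(x))$ to an image distance $|f(x)-f(\xi)|$, but the $L$-bilipschitz hypothesis only compares distances \emph{within} $\partial D$ and $\partial D'$, so there is no legitimate step that converts $|f(x)-f(\xi)|$ back to $|x-\xi|\aeq d_D(x)$. (Indeed $d_{D'}(f(x))\aeq d_D(x)$ is false under the hypotheses of the theorem: the radial power map on $E\setminus\{0\}$ is $\varphi$-FQC with identity boundary values, yet $d_{D'}(f(x))=|x|^{\alpha}$ while $d_D(x)=|x|$.) What you are really trying to reprove, in disguise, is the very QM-to-QS upgrade that the paper simply quotes; rather than push the hands-on propagation argument further, it is both quicker and safer to pass through quasim\"obius and use the three-point criterion, exactly as the paper does.
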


Applying this result to the case of a bounded domain $D$ we obtain the second result. Recall that in the case of ${\mathbb R}^n$ results of this type have been proved by R. N\"akki and B. Palka \cite{np}. For the
definitions, see Section \ref{sec-2}.

\begin{theorem}\label{thm1.3}
Let $D\subsetneq E$, $D'\subsetneq E'$ be $c$-uniform domains. If $f\in QC^L_{\varphi}(D,D')$ and $D$ is bounded, then  for all $x,y\in D$, $$\frac{|x-y|^{1/{\alpha}}}{C}\leq|f(x)-f(y)|\leq C|x-y|^{\alpha},$$ where $C\geq 1$ and $\alpha\in(0,1)$ depend on $c$, $L$, $\varphi$ and $\diam(D)$.
\end{theorem}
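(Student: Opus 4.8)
The plan is to deduce the two-sided H\"older estimate directly from the quasisymmetry already established in Theorem~\ref{thm1.2}, the only extra ingredients being the boundedness of $D$ and the bilipschitz boundary values, which together pin down the absolute scale of the image. By Theorem~\ref{thm1.2} the map $f$ is $\eta$-QS on $\overline{D}$ with $\eta=\eta(c,L,\varphi)$. Since $D$ is a domain, $\overline{D}$ is connected, so by the Tukia--V\"ais\"al\"a theorem on power quasisymmetry $f$ is in fact $\eta_1$-QS with $\eta_1(t)=C_0\max\{t^{\beta},t^{1/\beta}\}$, where $C_0\ge 1$ and $\beta\in(0,1]$ depend only on $\eta$, hence only on $c,L,\varphi$. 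Applying the standard diameter-distortion inequalities for $\eta_1$-QS maps to the pair $A=\overline{D}$ and $B=\{x,y\}$, and writing $R=\diam D$, $R'=\diam D'$, I would obtain
\[
\frac{1}{2\,\eta_1(R/|x-y|)}\le \frac{|f(x)-f(y)|}{R'}\le 2\,\eta_1\!\Big(\frac{|x-y|}{R}\Big).
\]
Because $|x-y|/R\le 1$ and $R/|x-y|\ge 1$, the power form of $\eta_1$ turns both sides into genuine powers of $|x-y|$, so everything reduces to controlling $R'$ in terms of $c,L,\varphi$ and $R$.

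The crucial point is therefore the scale anchoring, and this is where the bilipschitz boundary values enter. First I would record a purely topological estimate: for any bounded domain $D\subsetneq E$ with $\dim E\ge 2$ one has $\diam\partial D\ge \tfrac12\diam D$. Indeed, fixing $\xi_0\in\partial D$ and putting $\rho=\diam\partial D$, the set $E\setminus\overline{B}(\xi_0,\rho)$ is connected (the exterior of a ball in a normed space of dimension at least $2$ is path connected) and misses $\partial D$, hence lies in a single component of $E\setminus\partial D$; since it is unbounded while $D$ is bounded, that component is not $D$, forcing $D\subset\overline{B}(\xi_0,\rho)$ and thus $\diam D\le 2\rho$. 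Combined with the trivial bound $\diam\partial D\le\diam\overline{D}=\diam D$ this gives $\diam\partial D\asymp\diam D$. Since $f$ is $\eta$-QS and $\overline{D}$ is bounded, $\overline{D}'=f(\overline{D})$ is bounded as well, so the same estimate applies to $D'$. Finally, $f|_{\partial D}$ is $L$-bilipschitz onto $\partial D'$, whence $\diam\partial D'\asymp_L\diam\partial D$. Chaining these comparisons yields $R'=\diam D'\asymp\diam D=R$ with a comparison constant depending only on $L$.

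With $R'\asymp R$ in hand, I would substitute into the displayed inequalities. For the upper bound, $\eta_1(|x-y|/R)=C_0(|x-y|/R)^{\beta}$ gives $|f(x)-f(y)|\le C|x-y|^{\beta}$; for the lower bound, $\eta_1(R/|x-y|)=C_0(R/|x-y|)^{1/\beta}$ gives $|f(x)-f(y)|\ge C^{-1}|x-y|^{1/\beta}$, where in both cases $C\ge1$ absorbs $C_0$, the comparison constant between $R$ and $R'$, and the fixed powers of $R$, and so depends only on $c,L,\varphi$ and $\diam D$. Taking $\alpha=\beta$ (and replacing it by any smaller value in $(0,1)$ if $\beta=1$, which only weakens the estimate) gives exactly the asserted two-sided H\"older condition.

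The genuine content of the argument is the scale anchoring of the second paragraph: quasisymmetry alone is scale invariant and controls only ratios of distances, so on its own it cannot produce the absolute constants appearing in the statement. The bilipschitz boundary hypothesis supplies the missing fixed scale, but only after one knows that $\partial D$ and $\partial D'$ are non-degenerate at the scale of their respective domains; the topological diameter estimate is what makes this quantitative, and it is the step I would be most careful to justify. Everything else is a routine combination of Theorem~\ref{thm1.2}, the Tukia--V\"ais\"al\"a upgrade to power quasisymmetry, and the standard diameter inequalities for quasisymmetric maps.
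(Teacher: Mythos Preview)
Your argument is correct and follows essentially the same route as the paper: deduce $\eta$-quasisymmetry on $\overline{D}$ from Theorem~\ref{thm1.2}, then convert this into a two-sided H\"older estimate using the boundedness of $D$ together with the diameter comparison $\diam D'\asymp_L\diam D$. The only difference is packaging: the paper invokes Lemma~\ref{proof-lem-2} (V\"ais\"al\"a's H\"older lemma for QS maps on bounded connected sets) as a black box and relies on the comparison \eqref{thm-1-4} already obtained in the proof of Theorem~\ref{thm1.2}, whereas you reprove both ingredients from scratch via the power-QS upgrade, the Tukia--V\"ais\"al\"a diameter-distortion inequality, and the topological estimate $\diam\partial D\ge\tfrac12\diam D$.
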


Our third result concerns the case when both $D$ and $D'$ are uniform domains and $\varphi(t)=Mt$ for some fixed $M \ge 1\,.$ We also require a density condition of the boundary of a domain. This
$(r_1,r_2)$-HD condition will be defined in Section 2.

\begin{theorem}\label{thm1.4}
Let $D\subsetneq E$, $D'\subsetneq E'$ be $c$-uniform domains and the boundary of $D$ be $(r_1,r_2)$-HD. If $f\in QC^L_{\varphi}(D,D')$ with $\varphi(t)=Mt$, then $f$ is $M'$-bilipschitz in $\overline{D}$, where $M'$ depends only on $c$, $r_1$, $r_2$, $L$ and $M$.
\end{theorem}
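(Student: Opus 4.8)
The plan is to prove Theorem 1.4 by combining the quasisymmetry from Theorem 1.2 with the special structure available when $\varphi(t)=Mt$. Since the class $QC^L_\varphi(D,D')$ with $\varphi(t)=Mt$ is contained in the class treated by Theorem 1.2, we already know $f$ is $\eta$-QS in $\overline{D}$ with $\eta$ depending on $c$, $L$ and $M$. The goal is to upgrade this quasisymmetry to genuine bilipschitz control, and the key extra input is that $\varphi$ being linear means $f|_D$ is in fact $M$-QH (quasihyperbolic), i.e. $k_{D'}(f(x),f(y)) \le M\,k_D(x,y)$ and symmetrically, rather than merely FQC. Thus the strategy splits into local and boundary regimes, and I would aim to establish bilipschitz estimates on the norm metric in each regime and then patch them.

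First I would exploit the $M$-QH property infinitesimally. For points $x,y\in D$ that are close relative to their distance to $\partial D$ — say $|x-y| \le \frac12 \operatorname{dist}(x,\partial D)$ — the quasihyperbolic distance $k_D(x,y)$ is comparable to $|x-y|/\operatorname{dist}(x,\partial D)$, and the $M$-QH inequality transfers this to $k_{D'}(f(x),f(y))$. The point is to convert the QH estimate on $D'$ back to a norm estimate, which requires controlling $\operatorname{dist}(f(x),\partial D')$ in terms of $\operatorname{dist}(x,\partial D)$; this comparison of distances-to-boundary is exactly where the $\eta$-QS property and the uniformity of the domains enter, giving $\operatorname{dist}(f(x),\partial D') \aeq |f(x)-f(z)|$ for a nearest boundary point $z$, controlled via $\eta$. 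Carrying this through yields $|f(x)-f(y)| \aeq |x-y|$ with constants depending on $c$, $L$, $M$ in the interior regime.

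Next I would handle points near the boundary, which is where the $(r_1,r_2)$-HD density hypothesis on $\partial D$ does its work. The HD condition guarantees that boundary points are quantitatively accessible, so that for $x$ close to $\partial D$ one can choose a boundary point $z$ with $|x-z|\aeq \operatorname{dist}(x,\partial D)$ and control how $f$ distorts the pair $(x,z)$ using the $L$-bilipschitz boundary values together with $\eta$-QS. The density condition prevents the boundary from being too sparse to pin down $\operatorname{dist}(f(x),\partial D')$, and it is precisely this estimate $\operatorname{dist}(f(x),\partial D') \aeq \operatorname{dist}(x,\partial D)$, combined with the $L$-bilipschitz data on $\partial D$, that I expect to be the main obstacle: one must show the boundary distortion and the interior $M$-QH distortion are mutually compatible so that the multiplicative constants do not blow up. Finally, for points $x,y$ that are far apart relative to their boundary distances, I would take a quasihyperbolic geodesic or a cone/uniform curve joining them, decompose it into pieces each lying in one of the two regimes, apply the respective bilipschitz estimates along the pieces, and sum, using the uniformity of $D$ and $D'$ to keep the total length comparable to $|x-y|$. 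Combining the interior estimate, the boundary estimate, and the $L$-bilipschitz values on $\partial D$ via a standard limiting argument then yields a single constant $M'$ depending only on $c$, $r_1$, $r_2$, $L$ and $M$, and gives the two-sided bound on all of $\overline{D}$.
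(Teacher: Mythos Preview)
Your skeleton matches the paper's: a dichotomy between the ``close'' regime $|z_1-z_2|\le \tfrac{1}{2M}d_D(z_1)$ (handled via $M$-QH and Lemma~\ref{proof-lem-1}) and a ``far'' regime, with the crux being the boundary-distance comparison $d_{D'}(x')\aeq d_D(x)$. But your account of \emph{why} this comparison holds has a gap. You attribute it to ``$\eta$-QS and uniformity'', and later invoke HD only to ``choose a boundary point $z$ with $|x-z|\aeq d_D(x)$''---but the latter is trivially possible without any density hypothesis. Quasisymmetry controls only ratios; to pin down $|x'-z'|$ absolutely you must anchor to the $L$-bilipschitz boundary values, and for that you need a \emph{second} boundary point at distance $\aeq d_D(x)$ from the first, which is exactly what HD supplies via Lemma~\ref{lem-2-2}. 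This is not cosmetic: Remark~\ref{remark}(2) exhibits an $M$-QH self-map of the uniform domain $E\setminus\{0\}$ which is $\eta$-QS on the closure with identity boundary values yet fails to be bilipschitz, precisely because $\partial D=\{0,\infty\}$ is not HD. The paper packages the correct argument as Lemma~\ref{lem-1}, proved directly from the quasim\"obius extension (Lemma~\ref{pre-lem-3}) together with Lemma~\ref{lem-2-2} and the bilipschitz boundary data, without passing through Theorem~\ref{thm1.2}.

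Your chaining step for the far regime would work but is unnecessary. Once Lemma~\ref{lem-1}(2) gives $|x'-x_1'|\aeq|x-x_1|$ for \emph{every} boundary point $x_1$ (not just nearby ones), the paper handles the far case $|z_1-z_2|>\tfrac{1}{2M}d_D(z_1)$ by a single triangle inequality $|z_1'-z_2'|\le|z_1'-z'|+|z'-z_2'|$ through one boundary point $z$ with $|z_1-z|\le 2d_D(z_1)$; no curve decomposition is needed.
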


Our fourth result deals with the case when $D=D'$, $L=1$ and, moreover, the boundary mapping
 $f|_{\partial D}:\partial D\to\partial D$ is the identity. This problem has been studied very recently in  \cite{MV,M2,VZ}.
Originally, the problem was motivated by Teichm\"uller's
work on plane quasiconformal maps \cite{K,T} and then extended to
the higher dimensional case by several authors:  \cite{AV},
 \cite{M2},  \cite{MV,VZ}. Our result is as follows.

\begin{theorem}\label{thm1.1}
Let $D\subsetneq E$ be a $c$-uniform domain with $(r_1,r_2)$-HD  boundary.
If $f$ is a $\varphi$-FQC map which maps $D$ onto itself with identity boundary values, then for all $x\in D$, $$k_D(x, f(x))\leq C,$$ where $C$ is a constant depending on $r_1$, $r_2$, $c$ and $\varphi$ only.

\end{theorem}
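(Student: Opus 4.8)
The plan is to reduce the quasihyperbolic displacement bound to a bound on the distance-ratio metric $j_D$, and to obtain the latter from quasisymmetry together with the fact that $f$ and $f^{-1}$ fix $\partial D$ pointwise. Since the identity map of $\partial D$ is trivially $1$-bilipschitz, we have $f\in QC^1_\varphi(D,D)$ with $D=D'$ a $c$-uniform domain, so Theorem \ref{thm1.2} applies: both $f$ and $f^{-1}$ are $\eta$-QS on $\overline D$ with $\eta$ depending only on $c$ and $\varphi$. Writing $d(x)=d(x,\partial D)$ and recalling that for a $c$-uniform domain $k_D(x,y)\ale j_D(x,y)=\log\big(1+|x-y|/\min\{d(x),d(y)\}\big)$ with implied constant depending on $c$, it suffices to show that $|x-f(x)|\ale d(x)$ and that $d(f(x))\aeq d(x)$, with all constants depending only on $c,\varphi,r_1,r_2$.

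Fix $x\in D$ and let $w\in\partial D$ be a nearest boundary point, so $|x-w|=d(x)$ and $f(w)=w$. The first step is to pin down the image scale: I claim $|f(x)-w|\aeq d(f(x))$. The inequality $d(f(x))\le|f(x)-w|$ is immediate because $w=f(w)\in\partial D$. For the reverse, let $w''\in\partial D$ realize $d(f(x))=|f(x)-w''|$; then $f^{-1}(w'')=w''$, and applying the $\eta$-QS inequality for $f^{-1}$ to the triple $f(x),w'',w$ gives $|x-w''|/|x-w|\le\eta'(d(f(x))/|f(x)-w|)$. Since $|x-w''|\ge d(x)=|x-w|$ the left side is at least $1$, forcing $|f(x)-w|\ale d(f(x))$, as claimed.

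The second, and main, step is the two-sided comparison $d(f(x))\aeq d(x)$, and this is where the $(r_1,r_2)$-HD condition enters. Using homogeneous density of $\partial D$ at the scale $d(x)$, I would select a second boundary point $w_1\in\partial D$ with $|w-w_1|\aeq d(x)$, whence also $|x-w_1|\aeq d(x)$. Both $w,w_1$ are fixed by $f$, and comparing $|f(x)-f(w)|$ with $|f(w)-f(w_1)|$ via the $\eta$-QS inequality for $f$ (using $|w-w_1|/|x-w|\aeq 1$) yields $|f(x)-w|\age|w-w_1|\age d(x)$. Combined with the first step this gives $d(f(x))\age d(x)$. Running the same argument for $f^{-1}$ at the point $f(x)$, with its own nearest boundary point and HD-companion, yields $d(x)=d(f^{-1}(f(x)))\age d(f(x))$, so $d(f(x))\aeq d(x)$. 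Finally $|x-f(x)|\le|x-w|+|w-f(x)|=d(x)+|f(x)-w|\aeq d(x)$, so $j_D(x,f(x))$ is bounded and $k_D(x,f(x))\le C$ follows from uniformity.

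I expect the main obstacle to be the careful use of the $(r_1,r_2)$-HD condition to extract the separated companion point $w_1$ at exactly the scale $d(x)$ in a way valid at every scale, in particular when $D$ is unbounded and $d(x)$ is large, together with the bookkeeping of the quasisymmetry functions so that the final constant depends only on $r_1,r_2,c,\varphi$. The symmetric treatment of $f$ and $f^{-1}$ must also be organized so that the comparability $d(f(x))\aeq d(x)$ is not derived using the very bound one is trying to prove.
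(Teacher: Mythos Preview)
Your strategy is correct and matches the paper's: bound $j_D(x,f(x))$ by establishing $|x-f(x)|\ale d_D(x)$ and $d_D(f(x))\aeq d_D(x)$, then invoke Lemma~\ref{pre-lem-1}. The paper packages these two estimates into Lemma~\ref{lem-1}, whose proof combines the quasim\"obius property (Lemma~\ref{pre-lem-3}) with a direct FQC inequality, whereas you route everything through Theorem~\ref{thm1.2} and work with quasisymmetry of $f$ and $f^{-1}$ instead; both approaches need the HD companion-point construction (the paper's Lemma~\ref{lem-2-2}) that you correctly flag as the main technical point. Your QS-based argument is slightly more streamlined than the paper's cross-ratio computation, at the cost of invoking Theorem~\ref{thm1.2} as a black box.

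One small technical fix: in a general Banach space a nearest boundary point $w$ with $|x-w|=d_D(x)$ need not be attained, so you should work with $|x-w|\le 2d_D(x)$ (and similarly for $w''$), as the paper does throughout; all of your estimates go through with only cosmetic changes to the constants.
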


For the case $n=2$, when $D$ is the unit disk, the sharp bound is
due to Teichm\"uller \cite{K, T}. For the case of unit ball in $\mathbb{R}^n, n\ge 2,$ nearly sharp results appear in \cite{MV,VZ}. In both
of these cases one uses the hyperbolic metric in place of the
quasihyperbolic metric.

We do not know whether there are sharp results for the Banach spaces, too. For instance, it is an open problem whether Theorem \ref{thm1.1} could be refined for the case $D=\mathbb{B}$, the unit ball, to the effect that $C\rightarrow 0$ when $\varphi$ approaches
the identity map.

The organization of this paper is as follows. In Section
\ref{sec-4}, we will prove Theorems \ref{thm1.2}, \ref{thm1.3},
\ref{thm1.4} and \ref{thm1.1}. In Section \ref{sec-2}, some
preliminaries are stated.


\section{Preliminaries}\label{sec-2}

\mysubsection{\bf Notation.} We adopt mostly the standard notation and terminology from  V\"ais\"al\"a \cite{Vai6-0,Vai5}. We always use $E$ and $E'$ to denote real
Banach spaces with dimension at least $2$. The norm of a vector $z$
in $E$ is written as $|z|$, and for every pair of points $z_1$,
$z_2$ in $E$, the distance between them is denoted by $|z_1-z_2|$,
the closed line segment with endpoints $z_1$ and $z_2$ by $[z_1,
z_2]$. Moreover, we use $\mathbb{B}(x, r)$ to denote the ball with center $x\in E$ and radius $r$ $(> 0)$,
 and its boundary and closure are denoted by
$\mathbb{S}(x,\; r)$ and $\overline{\mathbb{B}}(x,\; r)$,
respectively. In particular, we use $\mathbb{B}$ to denote the unit
ball $\mathbb{B}(0,\; 1)$. The one-point extension of $E$ is the Hausdorff space $\dot{E}=E\cup\{\infty\}$, where the neighborhoods of
$\infty$ are the complements of closed bounded sets of $E$. The boundary $\partial A$ and the closure $\overline{A}$ of a set $A\subset E$ are taken in $\dot{E}$.

\mysubsection{\bf Quasihyperbolic distance and uniform domains.} The {\it quasihyperbolic length}
of a rectifiable arc or a path
$\alpha$ in the norm metric in $D$ is the number (cf.
\cite{Geo,GP,Vai6-0}):

$$\ell_k(\alpha)=\int_{\alpha}\frac{|dz|}{d_{D}(z)},
$$ where $d_D(z)$ denotes the
distance from $z$ to the boundary $\partial D$ of $D$.

For each pair of points $z_1$, $z_2$ in $D$, the {\it quasihyperbolic distance}
$k_D(z_1,z_2)$ between $z_1$ and $z_2$ is defined in the usual way:
$$k_D(z_1,z_2)=\inf\ell_k(\alpha),
$$
where the infimum is taken over all rectifiable arcs $\alpha$
joining $z_1$ to $z_2$ in $D$.

For each pair of points $z_1$, $z_2$ in $D$, the {\it distance ratio
metric} $j_D(z_1,z_2)$ between $z_1$ and $z_2$ is defined by
$$j_D(z_1,z_2)=\log\Big(1+\frac{|z_1-z_2|}{\min\{d_D(z_1),d_D(z_2)\}}\Big).$$

 For all $z_1$, $z_2$ in $D$, we have
(cf. \cite{Vai6-0})

\begin{equation}\label{eq(0000)} k_{D}(z_1, z_2)\geq
\inf\left\{\log\Big(1+\frac{\ell(\alpha)}{\min\{d_{D}(z_1), d_{D}(z_2)\}}\Big)\right\}\geq j_D(z_1, z_2)\end{equation}
$$ \geq
\Big|\log \frac{d_{D}(z_2)}{d_{D}(z_1)}\Big|,$$
where the infimum is taken over all rectifiable curves $\alpha$ in $D$ connecting $z_1$ and $z_2$. Moreover, if $|z_1-z_2|\le d_D(z_1)$, we have
\cite{Vai6-0, Mvo1}
\begin{equation} \label{vu1}
k_D(z_1,z_2)\le \log\Big( 1+ \frac{
|z_1-z_2|}{d_D(z_1)-|z_1-z_2|}\Big).
\end{equation}

Gehring and Palka \cite{GP} introduced the quasihyperbolic metric of
a domain in $\mathbb{R}^n$ and it has been recently used by many authors
 in the study of quasiconformal mappings and related questions \cite{HIMPS, krt, rt} etc.

\begin{definition}   A domain $D$ in $E$ is called $c$-{\it
uniform} in the norm metric provided there exists a constant $c$
with the property that each pair of points $z_{1},z_{2}$ in $D$ can
be joined by a rectifiable arc $\alpha$ in $ D$ satisfying (see \cite{Martio-80, Vai, Vai4})

 \begin{enumerate}
\item\label{wx-4} $\ds\min_{j=1,2}\ell (\alpha [z_j, z])\leq c\,d_{D}(z)
$ for all $z\in \alpha$, and

\item\label{wx-5} $\ell(\alpha)\leq c\,|z_{1}-z_{2}|$,
\end{enumerate}

\noindent where $\ell(\alpha)$ denotes the length of $\alpha$ and
$\alpha[z_{j},z]$ the part of $\alpha$ between $z_{j}$ and $z$. Moreover, $\alpha$ is said to be a {\it uniform arc}.
\end{definition}

In \cite{Vai6},  V\"ais\"al\"a characterized uniform domains as follows.

\medskip
\begin{lemma}\label{pre-lem-1}\;\; {\rm (\cite[Theorem 6.16]{Vai6})}
For a domain $D$, the following are quantitatively equivalent: \begin{enumerate}

\item $D$ is a $c$-uniform domain;
\item $k_D(z_1,z_2)\leq c'\; j_D(z_1,z_2)$
for all $z_1,z_2\in D$;
\item $k_D(z_1,z_2)\leq c'_1\; j_D(z_1,z_2) +d$
for all $z_1,z_2\in D$.\end{enumerate}

\end{lemma}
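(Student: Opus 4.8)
The plan is to establish the cycle (1) $\Rightarrow$ (2) $\Rightarrow$ (3) $\Rightarrow$ (1), tracking the dependence of the constants at each stage so that the equivalence comes out quantitative. The implication (2) $\Rightarrow$ (3) is immediate (take $d=0$), so the real work is (1) $\Rightarrow$ (2) and the converse direction (3) $\Rightarrow$ (1).

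For (1) $\Rightarrow$ (2), fix $z_1,z_2$, take a $c$-uniform arc $\gamma$ joining them, and estimate $\ell_k(\gamma)=\int_\gamma |dz|/d_D(z)$ directly; since $k_D(z_1,z_2)\le \ell_k(\gamma)$ this bounds $k_D$. I would split $\gamma$ at its arclength midpoint into halves $\gamma_1,\gamma_2$ abutting $z_1,z_2$. On $\gamma_1$, parameterized by Euclidean arclength $s$ from $z_1$, the cone condition~\eqref{wx-4} gives $d_D\ge s/c$, while the $1$-Lipschitz property of $d_D$ gives $d_D\ge d_D(z_1)-s$; integrating $ds/d_D$ against the larger of these two lower bounds (the crossover occurs at $s=c\,d_D(z_1)/(c+1)$) yields a bound of the form $\log(c+1)+c\log\bigl(\ell(\gamma)/d_D(z_1)\bigr)$ up to constants. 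Treating $\gamma_2$ symmetrically and invoking~\eqref{wx-5} to replace $\ell(\gamma)$ by $c\,|z_1-z_2|$, I obtain $k_D(z_1,z_2)\le A+B\log\bigl(1+|z_1-z_2|/\min\{d_D(z_1),d_D(z_2)\}\bigr)$ with $A,B$ depending only on $c$. For points with $|z_1-z_2|\ge \min\{d_D(z_1),d_D(z_2)\}$ this is already a multiple of $j_D$; for the remaining nearby points the local estimate~\eqref{vu1} gives $k_D\aeq j_D$ with absolute constants, so the additive term is absorbed and one arrives at the clean multiplicative bound~(2).

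The substantial step is (3) $\Rightarrow$ (1), and here I would construct the required uniform arcs as quasihyperbolic $\nu$-neargeodesics (for a fixed $\nu>1$), whose existence in a Banach space I take from the cited theory of V\"ais\"al\"a. Fix such a $\gamma$ from $z_1$ to $z_2$ and verify~\eqref{wx-4} and~\eqref{wx-5}. The engine throughout is the relation, valid along any path parameterized by quasihyperbolic arclength $t$, that $|(\log d_D)'|\le 1$ (because $d_D$ is $1$-Lipschitz and the Euclidean speed equals $d_D$); hence along a subarc of quasihyperbolic length $\tau$ the boundary distance changes by at most a factor $e^{\pm\tau}$, and the Euclidean length of that subarc is controlled by $\int d_D\,dt$. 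Feeding hypothesis~(3) together with the neargeodesic property $\ell_k(\gamma[x,y])\le \nu\,k_D(x,y)\le \nu\bigl(c_1'\,j_D(x,y)+d\bigr)$ into this relation lets me bound the quasihyperbolic, and thence Euclidean, size of the relevant subarcs. I expect the length condition~\eqref{wx-5} to follow from the two-sided (both-endpoints) form of this estimate, whereas the cone condition~\eqref{wx-4} is the main obstacle: for $z\in\gamma$ one must show that the Euclidean arclength to the \emph{nearer} endpoint does not exceed $c\,d_D(z)$, i.e.\ that the neargeodesic only approaches $\partial D$ close to its ends. This is precisely the point where the comparison hypothesis, rather than the mere existence of a neargeodesic, is indispensable, since it rules out the deep excursions toward the boundary that a bare quasihyperbolic length bound would otherwise permit; controlling those excursions uniformly in $z$ is the crux of the argument.
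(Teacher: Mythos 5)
The paper does not prove this lemma at all: it is quoted verbatim from V\"ais\"al\"a \cite[Theorem 6.16]{Vai6}, so the only fair comparison is with V\"ais\"al\"a's argument, and your overall plan does mirror it. Your implication $(1)\Rightarrow(2)$ is correct and essentially complete: on the half of the uniform arc nearer $z_1$ in arclength, \eqref{wx-4} does give $d_D(z)\ge s/c$, the crossover integration against $\max\{s/c,\,d_D(z_1)-s\}$ is the standard Gehring--Osgood computation (which survives verbatim in Banach spaces, since it only uses arclength), and the absorption of the additive constant via \eqref{vu1} when $|z_1-z_2|\le \tfrac12\min\{d_D(z_1),d_D(z_2)\}$, together with $j_D\ge\log\tfrac32$ otherwise, is exactly right. $(2)\Rightarrow(3)$ is indeed trivial. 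Your choice of $\nu$-neargeodesics for $(3)\Rightarrow(1)$ is also the correct vehicle (quasihyperbolic geodesics need not exist in Banach spaces, and their existence-free replacement by neargeodesics is precisely V\"ais\"al\"a's device).

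The genuine gap is that $(3)\Rightarrow(1)$ is a plan, not a proof, and you concede this yourself (``the crux of the argument''). Worse, the one quantitative mechanism you propose to ``feed in'' does not suffice even for the part you expect to be easy, namely \eqref{wx-5}: combining $\ell_k(\gamma[x,y])\le \nu\,k_D(x,y)\le \nu\bigl(c_1'\,j_D(x,y)+d\bigr)$ with the lower bound $\ell_k(\gamma[x,y])\ge \log\bigl(1+\ell(\gamma[x,y])/\min\{d_D(x),d_D(y)\}\bigr)$ from \eqref{eq(0000)} yields only $\ell(\gamma[x,y])\le m\bigl(e^{\nu d}\bigl(1+|x-y|/m\bigr)^{\nu c_1'}-1\bigr)$ with $m=\min\{d_D(x),d_D(y)\}$, which grows like $|x-y|^{\nu c_1'}$ and is useless for the linear bound $\ell(\gamma)\le c\,|z_1-z_2|$ once $|x-y|\gg m$. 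The missing idea is the Gehring--Osgood type bootstrap that both cigar conditions hinge on: one must show that along the neargeodesic, parameterized by quasihyperbolic arclength $t$ from the nearer endpoint, the boundary distance grows \emph{exponentially}, $d_D(z(t))\ge C^{-1}e^{\alpha t}\,d_D(z(0))$ with $\alpha, C$ depending only on $\nu, c_1', d$ (proved by iterating the hypothesis on suitably chosen subarcs, not by a single application of it); only then does $\ell(\gamma[z_1,z])=\int_0^t d_D\,dt\le C'\,d_D(z)$ deliver \eqref{wx-4}, and applying this from both ends together with a $j_D$-comparison at the quasihyperbolic midpoint delivers \eqref{wx-5}. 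Without that self-improvement lemma, the deep excursions toward $\partial D$ that you correctly identify as the obstacle are not ruled out, so the hardest implication of the lemma remains unestablished.
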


\medskip

 In the case of domains in $ {\mathbb R}^n \,,$ the equivalence
  of items (1) and (3) in Theorem D is due to Gehring and Osgood \cite{Geo} and the
  equivalence of items (2) and (3) due to Vuorinen \cite{Mvo1}. Many of the
  basic properties of this metric may be found in \cite{Geo, krt, rt, Vai6-0, Vai6}.

In \cite{Vai5}, V\"ais\"al\"a proved the following examples for some special uniform domain.

\begin{example}\;\; {\rm (\cite[Examples 10.4]{Vai5})}(1) Each ball $B\subset E$ is $2$-uniform;

(2) Every bounded convex domain $G\subset E$ is uniform;

(3) Half space $H\subset E$ is $c$-uniform for all $c>2$.

\end{example}


\medskip
\mysubsection{\bf Bilipschitz and FQC maps.}

\begin{definition}  Suppose $G\varsubsetneq E\,,$ $G'\varsubsetneq
E'\,,$ and $M \ge 1\,.$ We say that a homeomorphism $f: G\to G'$ is
{\it $M$-bilipschitz} if
 $$|x-y|/M \leq |f(x)-f(y)|\leq M\,|x-y|$$
for all $x$, $y\in G$,  and {\it $M$-QH} if
 $$k_{G}(x,y)/M\leq k_{G'}(f(x),f(y))\leq M\,k_{G}(x,y)$$
for all $x$, $y\in G$.
\end{definition}

Clearly, if $f$ is $M$-bilipschitz or  $M$-QH, then also $f^{-1}$
has the same property.

\begin{definition}   Let $G\not=E$ and $G'\not=E'$ be metric
spaces, and let $\varphi:[0,\infty)\to [0,\infty)$ be a growth
function, that is, a homeomorphism with $\varphi(t)\geq t$. We say
that a homeomorphism $f: G\to G'$ is {\it $\varphi$-semisolid} if
$$ k_{G'}(f(x),f(y))\leq \varphi(k_{G}(x,y))$$
for all $x$, $y\in G$, and {\it $\varphi$-solid} if both $f$ and $f^{-1}$ satisfy this condition.

We say that $f$ is {\it fully $\varphi$-semisolid}
(resp. {\it fully $\varphi$-solid}) if $f$ is
$\varphi$-semisolid (resp. $\varphi$-solid) on every  subdomain of $G$. In particular,
when $G=E$, the corresponding subdomains are taken to be proper ones. Fully $\varphi$-solid maps are also called {\it freely
$\varphi$-quasiconformal maps}, or briefly {\it $\varphi$-FQC maps}.
\end{definition}

Clearly, if $f$ is freely
$\varphi$-quasiconformal, then so is $f^{-1}\,.$

If $E=\mathbb{R}^n=E'$, then $f$ is $FQC$ if and only if $f$ is
quasiconformal (cf. \cite{Vai6-0}). See \cite{Vai1, Mvo1} for definitions and
properties of $K$-quasiconformal maps, or briefly $K$-QC maps.

\mysubsection{\bf  Quasisymmetric and quasim\"obius maps.}  Let $X$ be a metric space and $\dot{X}=X\cup \{\infty\}$. By a
triple in $X$ we mean an ordered sequence $T=(x,a,b)$ of three
distinct points in $X$. The ratio of $T$ is the number
$$\rho(T)=\frac{|a-x|}{|b-x|}.$$ If $f: X\to Y$ is  an injective
map, the image of a triple  $T=(x,a,b)$  is the triple
$fT=(fx,fa,fb)$.


\begin{definition}
Let $X$ and $Y$ be two metric spaces, and let
$\eta: [0, \infty)\to [0, \infty)$ be a homeomorphism.
An embedding $f: X\to Y$ is said to be {\it $\eta$-quasisymmetric}, or briefly $\eta$-$QS$, if $\rho(f(T))\leq \eta(\rho(T))$  for each triple $T$ in $X$.
\end{definition}

It is known that an embedding $f: X\to Y$ is $\eta$-$QS$  if
and only if $\rho(T)\leq t$ implies that $\rho(f(T))\leq \eta(t)$
for each triple $T$ in $X$ and $t\geq 0$ (cf. \cite{TV}).

A quadruple in $X$ is an ordered sequence $Q=(a,b,c,d)$ of four
distinct points in $X$. The cross ratio of $Q$ is defined to be the
number
$$\tau(Q)=|a,b,c,d|=\frac{|a-b|}{|a-c|}\cdot\frac{|c-d|}{|b-d|}.$$
Observe that the definition is extended in the well known manner
to the case where one of the points is $\infty$. For example,
$$|a,b,c,\infty|= \frac{|a-b|}{|a-c|}.$$
If $X_0 \subset \dot{X}$ and if $f: X_0\to \dot{Y}$
is an injective map, the image of a quadruple $Q$ in $X_0$ is the
quadruple $fQ=(fa,fb,fc,fd)$.


\begin{definition}   Let $X$ and $Y$ be two metric spaces and let
$\eta: [0, \infty)\to [0, \infty)$ be a homeomorphism.  An embedding $f: X\to Y$ is said to be {\it
$\eta$-quasim\"obius} (cf. \cite{Vai2}), or briefly $\eta$-$QM$, if the inequality $\tau(f(Q))\leq \eta(\tau(Q))$ holds for each
quadruple $Q$ in $X$.
\end{definition}

Observe that if $\infty\in X$ and if $f:X\to Y$ is $\eta$-quasim\"obius with $f(\infty)=\infty$, then $f$ is $\eta$-quasisymmetric (see \cite[6.18]{Vai5}). Conversely, the following result holds.

\medskip

\begin{lemma}\label{pre-lem-2}\;\;$($\cite[Theorem 3.12]{Vai2}$)$
Suppose that $X$ and $Y$ are bounded spaces, that $\lambda>0$, that $z_1,z_2,z_3\in X$, and that $f:X \to Y$ is $\theta$-quasim\"obius such that
$$|z_i-z_j|\geq \diam(X)/{\lambda}\, \mbox{,}\, |f(z_i)-f(z_j)|\geq\diam(Y)/{\lambda}$$ for $i\neq j$. Then $f$ is $\eta$-quasisymmetric with $\eta=\eta_{\theta,\lambda}$.
\end{lemma}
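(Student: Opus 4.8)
The plan is to deduce the quasisymmetry estimate directly from the quasim\"obius inequality by feeding in a well-chosen fourth point drawn from $z_1,z_2,z_3$. By the characterization of quasisymmetry recalled just after the definition of $\eta$-QS (cf. \cite{TV}), it suffices to produce a homeomorphism $\eta$ of $[0,\infty)$ such that $\rho(T)\le t$ forces $\rho(f(T))\le\eta(t)$ for every triple $T=(x,a,b)$ in $X$ and every $t\ge 0$. Replacing $\theta$ by $t\mapsto\max\{t,\theta(t)\}$, which is again a homeomorphism and only enlarges the control function, I may assume $\theta(t)\ge t$; in particular $\theta(1)\ge 1$. Write $s=\diam(X)/\lambda$ and $s'=\diam(Y)/\lambda$, so that $|z_i-z_j|\ge s$ and $|f(z_i)-f(z_j)|\ge s'$ for $i\ne j$.

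The first step is a selection lemma: for the given pair $a,b$, at least one of $z_1,z_2,z_3$, call it $w$, satisfies $|w-a|\ge s/2$ and $|f(w)-f(b)|\ge s'/2$. Indeed, if two of the $z_i$ lay within $s/2$ of $a$, the triangle inequality would place those two at mutual distance $<s$, contradicting the separation; hence at most one $z_i$ fails the first condition. Likewise at most one $z_i$ has its image within $s'/2$ of $f(b)$. Thus at most two of the three points are excluded, and a good $w$ exists.

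Next I exploit the cross ratio. For the quadruple $Q=(x,a,b,w)$ the definition gives $\tau(Q)=\rho(T)\cdot\frac{|b-w|}{|a-w|}$ and $\tau(f(Q))=\rho(f(T))\cdot\frac{|f(b)-f(w)|}{|f(a)-f(w)|}$, so that $\rho(f(T))=\tau(f(Q))\cdot\frac{|f(a)-f(w)|}{|f(b)-f(w)|}\le\theta(\tau(Q))\cdot\frac{|f(a)-f(w)|}{|f(b)-f(w)|}$ by $\theta$-quasim\"obiusness. The selection of $w$ now controls both geometric factors: since $|b-w|\le\diam(X)$ and $|w-a|\ge s/2$ we get $\frac{|b-w|}{|a-w|}\le 2\lambda$, and since $|f(a)-f(w)|\le\diam(Y)$ and $|f(w)-f(b)|\ge s'/2$ we get $\frac{|f(a)-f(w)|}{|f(b)-f(w)|}\le 2\lambda$. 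Hence whenever $\rho(T)\le t$ we obtain $\rho(f(T))\le 2\lambda\,\theta(2\lambda t)$, and we take $\eta=\eta_{\theta,\lambda}$ to be the homeomorphism $t\mapsto 2\lambda\,\theta(2\lambda t)$.

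Two routine points complete the argument. The four entries of $Q$ must be distinct: the separation bounds force $w\ne a$ and $f(w)\ne f(b)$, hence $w\ne b$, while $x\ne a,b$ and $a\ne b$ hold by hypothesis; the only remaining coincidence is $w=x$. In that exceptional case the very conditions on $w$ give $|a-x|\ge s/2$, so $\rho(T)\ge 1/(2\lambda)$, and $|f(b)-f(x)|\ge s'/2$, so directly $\rho(f(T))\le \diam(Y)/(s'/2)=2\lambda$; since then $t\ge 1/(2\lambda)$ and $\theta(1)\ge 1$ give $\eta(t)\ge 2\lambda$, the bound $\rho(f(T))\le\eta(t)$ still holds. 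The main obstacle is precisely this selection step, namely matching a single auxiliary point to the two asymmetric requirements, closeness to $a$ in $X$ and to $f(b)$ in $Y$, while keeping the quadruple nondegenerate; the counting shows that three separated points are exactly enough.
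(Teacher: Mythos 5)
The paper contains no proof of this lemma: it is quoted verbatim from V\"ais\"al\"a \cite[Theorem 3.12]{Vai2}, so your attempt can only be compared with the cited source. Your argument is correct and is essentially the standard proof from that paper: complete the triple to a quadruple by one of the three separated points and apply the quasim\"obius inequality once. I checked the details. The counting in the selection step is sound (at most one $z_i$ lies within $s/2$ of $a$, since two such points would be at mutual distance $<s$; likewise at most one image lies within $s'/2$ of $f(b)$, so a good $w$ survives). Your cross-ratio identity agrees with the paper's convention, since for $Q=(x,a,b,w)$ one has $\tau(Q)=\frac{|x-a|}{|x-b|}\cdot\frac{|b-w|}{|a-w|}=\rho(T)\cdot\frac{|b-w|}{|a-w|}$, and the two geometric factors are correctly bounded by $2\lambda$, giving $\rho(f(T))\leq 2\lambda\,\theta(2\lambda\,\rho(T))$. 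The nondegeneracy discussion is complete: $w\neq a$ and $w\neq b$ follow from the quantitative separation and injectivity, and the exceptional case $w=x$ is handled correctly, since then $\rho(T)\geq 1/(2\lambda)$ forces $t\geq 1/(2\lambda)$, while $\rho(f(T))\leq 2\lambda\leq 2\lambda\,\theta(2\lambda t)=\eta(t)$ once $\theta$ is normalized so that $\theta(t)\geq t$; that normalization is legitimate because $t\mapsto\max\{t,\theta(t)\}$ is again a homeomorphism of $[0,\infty)$ and enlarging the control function preserves the quasim\"obius condition. Since $\eta(t)=2\lambda\,\theta(2\lambda t)$ is a homeomorphism depending only on $\theta$ and $\lambda$, the characterization of quasisymmetry recalled after the definition of $\eta$-QS finishes the argument; no gaps.
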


\medskip
Concerning the relation between the class of uniform domains and
quasim\"obius maps, V\"ais\"al\"a proved the following result.

\medskip

\begin{lemma}\label{pre-lem-3}\;\;{\rm (\cite[Theorems 11.8 and 11.15]{Vai5})}
Suppose that $D\varsubsetneq E$ and $D'\varsubsetneq E'$, that $D$ and $D'$
are $c$-uniform domain, and that $f:D\to D'$ is a $\varphi$-FQC map. Then
 $f$ extends to a homeomorphism $\overline{f}: \overline{D}\to
\overline{D}'$ and $\overline{f}$ is $\theta_1$-QM in $\overline{ D}$.
\end{lemma}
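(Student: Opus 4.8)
The plan is to prove the two assertions separately, matching the two-source citation: first the boundary extension (that $f$ extends to a homeomorphism $\overline{f}\colon\overline{D}\to\overline{D}'$), and then the distortion bound (that $\overline{f}$ is $\theta_1$-QM). Both are driven by a single observation: on uniform domains the hypothesis $\varphi$-FQC (equivalently, $\varphi$-solidity of both $f$ and $f^{-1}$) forces $f$ to quasi-preserve the distance ratio metric. Indeed, combining the solidity inequalities $k_{D'}(f(x),f(y))\le\varphi(k_D(x,y))$ and its $f^{-1}$-counterpart with the uniformity comparison $j_D\le k_D\le c'\,j_D$ coming from \eqref{eq(0000)} and Lemma \ref{pre-lem-1} (and the analogous comparison in $D'$), one produces growth functions $\psi_1,\psi_2$, depending only on $c$ and $\varphi$, with
\[
 j_{D'}(f(x),f(y))\le \psi_1\big(j_D(x,y)\big)\quad\text{and}\quad j_D(x,y)\le \psi_2\big(j_{D'}(f(x),f(y))\big)
\]
for all $x,y\in D$. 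This two-sided $j$-control is the engine for everything that follows.

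First I would establish the extension. Unwinding the definition of $j_D$, the upper bound reads
\[
 \frac{|f(x)-f(y)|}{\min\{d_{D'}(f(x)),\,d_{D'}(f(y))\}}\le e^{\psi_1(j_D(x,y))}-1,
\]
with a symmetric lower bound from $\psi_2$; thus $f$ controls norm displacements relative to boundary distances in both directions. Fixing an interior basepoint $x_0$, a sequence $x_n\to\xi\in\partial D$ has $d_D(x_n)\to 0$ and hence $j_D(x_0,x_n)\to\infty$, so the bounds force $j_{D'}(f(x_0),f(x_n))\to\infty$, i.e. $f(x_n)$ accumulates only on $\partial D'$; and if two sequences converge to the same $\xi$ then their images remain $j_{D'}$-close, so by completeness of $E'$ the images converge to a well-defined point $\overline{f}(\xi)\in\partial D'$ independent of the approach. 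Running the same argument for $f^{-1}$ shows $\overline{f}$ is a bijection $\overline{D}\to\overline{D}'$, and the uniform two-sided control upgrades this to continuity of both $\overline{f}$ and $\overline{f}^{-1}$, hence to a homeomorphism. When $D$ or $D'$ is unbounded the point $\infty$ is handled through the topology of $\dot{E}$, in accordance with the convention that closures are taken in $\dot{E}$.

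It remains to show that $\overline{f}$ is $\theta_1$-QM, i.e. to bound $\tau(\overline{f}(Q))$ by $\theta_1(\tau(Q))$ for every quadruple $Q$ in $\overline{D}$. The mechanism is that in a $c$-uniform domain the cross ratio of a quadruple is comparable, up to constants depending only on $c$, to quantities computed from quasihyperbolic distances along near-minimizing curves joining the points. Since $f$ distorts $k_D$ only by the growth function $\varphi$ (and $j$ only by $\psi_1,\psi_2$), these quantities are moved by a controlled homeomorphism, which translates into $\tau(\overline{f}(Q))\le\theta_1(\tau(Q))$ for quadruples of interior points, with $\theta_1=\theta_1(c,\varphi)$. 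The estimate is then propagated to quadruples having points on $\partial D$ by approximating each boundary point by interior points and passing to the limit; this is legitimate because $\overline{f}$ is continuous and the $j$-control prevents the relevant ratios from degenerating in the limit.

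The hardest step is the cross-ratio comparison underlying the quasimöbius conclusion: converting the purely metric (solidity) control into a genuine inequality for $\tau$, uniformly over \emph{all} quadruples, and in particular over those touching $\partial D$, where $d_D\to 0$ and individual factors of $\tau(Q)$ may blow up. This is precisely the content of Väisälä's neargeodesic estimates in a uniform domain: although each single norm distance entering $\tau(Q)$ may degenerate near the boundary, the specific combination defining the cross ratio is stable and is displaced by $\overline{f}$ within a bound depending only on $c$ and $\varphi$. By contrast, the boundary extension is comparatively routine once the two-sided $j$-control is in hand.
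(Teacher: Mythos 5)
First, a point of reference: the paper does not prove this lemma at all --- it is quoted from V\"ais\"al\"a \cite[Theorems 11.8 and 11.15]{Vai5} --- so your attempt must be measured against that source. Your opening reduction is correct: $\varphi$-solidity of $f$ and $f^{-1}$ combined with Lemma \ref{pre-lem-1} and \eqref{eq(0000)} does give the two-sided control $j_{D'}(f(x),f(y))\le \varphi\big(c'\,j_D(x,y)\big)$ and symmetrically. But the extension argument you build on it contains a genuine error: for two sequences $x_n,y_n\to\xi\in\partial D$, the quantity $j_D(x_n,y_n)$ need \emph{not} stay bounded, so ``their images remain $j_{D'}$-close'' is unjustified. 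In the half-plane take $x_n=(0,1/n)$ and $y_n=(1/n,1/n^2)$: both tend to $\xi=0$, yet $j_D(x_n,y_n)\approx \log n\to\infty$. Worse, because $\varphi$ (hence your $\psi_1$) is an arbitrary growth function, the resulting norm estimate $|f(x_n)-f(y_n)|\le \big(e^{\psi_1(j_D(x_n,y_n))}-1\big)\min\{d_{D'}(f(x_n)),d_{D'}(f(y_n))\}$ yields no decay: $e^{\psi_1(\log(1/t))}$ can beat any power of $1/t$, so even a chaining argument along corkscrew points at dyadic scales (where consecutive $j$-distances \emph{are} bounded) only gives bounded ratios of the $d_{D'}$-values, not summable diameters. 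This is precisely why V\"ais\"al\"a's proof does not run on $j$-control alone: it uses the much stronger fact that a $\varphi$-solid map is $\eta$-quasisymmetric on Whitney-type balls $\mathbb{B}(x,d_D(x)/2)$ with $\eta=\eta_\varphi$ (from \cite{Vai6-0}), which provides power-type distortion at small relative scales, and then chains this along the cone points supplied by uniformity. Your closing remark that the extension is ``comparatively routine once the two-sided $j$-control is in hand'' has the difficulty exactly backwards.

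The quasim\"obius half is not actually proved either. The asserted mechanism --- that ``the cross ratio of a quadruple is comparable, up to constants depending only on $c$, to quantities computed from quasihyperbolic distances along near-minimizing curves'' --- is not a known lemma and no such formula is offered; $\tau(Q)$ for four interior points is not a function of the pairwise $k_D$-distances, and it is exactly the passage from solidity (a $k$-statement) to cross-ratio control (a Euclidean statement uniform over degenerating configurations) that constitutes \cite[Theorem 11.15]{Vai5}. When you then write that the uniform bound over quadruples touching $\partial D$ ``is precisely the content of V\"ais\"al\"a's neargeodesic estimates,'' you are in effect citing the theorem being proved: the genuine proof goes through relative quasisymmetry, diameter estimates in uniform domains, and the quasim\"obius machinery of \cite{Vai2}, with a case analysis over the relative positions of the four points. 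So both halves of your proposal fail at their decisive steps: the extension rests on a false boundedness claim, and the QM conclusion is circular.
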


\medskip

Finally we introduce the concept of homogeneous density from \cite{TV}.

\begin{definition}{\rm (\cite[Definition 3.8]{TV})}
A space $X$ is said to be {\it homogeneously dense}, abbreviated HD, if there are numbers $r_1$, $r_2$ such that $0<r_1\leq r_2<1$ and such that for each pair of points $a,b\in X$ there is $x\in X$ satisfying the condition $$r_1|b-a|\leq|x-a|\leq r_2|b-a|.$$ We also say that $X$ is $(r_1,r_2)$-HD or simply $r$-HD, where $r=(r_1,r_2)$.
\end{definition}

By the definition, obviously, a HD space has no isolated point. And for all $0<r_1\leq r_2<1$, every connected domain is $(r_1,r_2)$-HD, $[0,1]\cup [2,3]$ is $(\frac{1}{6},\frac{1}{4})$-HD (see \cite{TV}). Particularly, a finite union of connected nondegenerate sets (i.e. the set is not a point) is $(r_1,r_2)$-HD with some constants $0<r_1\leq r_2<1$.


For a HD space, Tukia and V\"ais\"al\"a  proved the following properties in \cite{TV}.

\medskip
\begin{lemma}\label{pre-lem-4}{\rm (\cite[Lemma 3.9]{TV})}
$(\textit{1})$ Let $X$ be $(r_1,r_2)$-HD and let $m$ be a positive integer. Then $X$ is $(r_1^m,r_2^m)$-HD.

$(\textit{2})$ Let $X$ be $r$-HD and let $f:X\to Y$ be $\eta$-QS. Then $fX$ is $\mu$-HD, where $\mu$ depends only on $\eta$ and $r$.
\end{lemma}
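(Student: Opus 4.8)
The plan is to prove the two assertions in turn, with part (1) serving as the engine for part (2).

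For part (1) I would induct on $m$, the case $m=1$ being the definition of $(r_1,r_2)$-HD. Assuming $X$ is $(r_1^{m-1},r_2^{m-1})$-HD, I fix $a,b\in X$ and first apply this hypothesis to the pair $a,b$ to obtain $y\in X$ with $r_1^{m-1}|b-a|\le|y-a|\le r_2^{m-1}|b-a|$; then I apply the original $(r_1,r_2)$-HD property to the pair $a,y$ to obtain $x\in X$ with $r_1|y-a|\le|x-a|\le r_2|y-a|$. Multiplying the two chains of inequalities yields $r_1^{m}|b-a|\le|x-a|\le r_2^{m}|b-a|$, and since $0<r_1\le r_2<1$ gives $0<r_1^m\le r_2^m<1$, the point $x$ exhibits the $(r_1^m,r_2^m)$-HD property, closing the induction.

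For part (2), write $r=(r_1,r_2)$ and recall that the homeomorphism $\eta\colon[0,\infty)\to[0,\infty)$ is necessarily strictly increasing with $\eta(0)=0$. Because $r_2<1$, we have $r_2^m\to 0$ and hence $\eta(r_2^m)\to 0$, so I can first fix a positive integer $m=m(\eta,r)$ with $\eta(r_2^m)<1$. By part (1), $X$ is $(r_1^m,r_2^m)$-HD. Given image points $f(a),f(b)\in fX$ with $a\ne b$, I apply this to the pair $a,b$ to produce $x\in X$ with
\[
r_1^{m}\,|b-a|\le|x-a|\le r_2^{m}\,|b-a|,
\]
and I claim $f(x)$ is the required witness for $fX$.

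The two bounds then come from quasisymmetry applied to two triples. For the upper bound I use $T=(a,x,b)$, so that $\rho(T)=|x-a|/|b-a|\le r_2^{m}$ and therefore $\rho(fT)=|f(x)-f(a)|/|f(b)-f(a)|\le\eta(r_2^m)=:\mu_2<1$; for the lower bound I use the reversed triple $T'=(a,b,x)$, so that $\rho(T')=|b-a|/|x-a|\le 1/r_1^{m}$ and hence $|f(x)-f(a)|/|f(b)-f(a)|\ge 1/\eta(1/r_1^m)=:\mu_1>0$. Since the single point $f(x)$ realizes both inequalities we automatically get $\mu_1\le\mu_2$, so $fX$ is $(\mu_1,\mu_2)$-HD with constants depending only on $\eta$ and $r$. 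The one genuinely delicate point is the choice of $m$: a direct application of quasisymmetry to the original HD property would yield only the upper constant $\eta(r_2)$, which may well exceed $1$, so the iteration furnished by part (1) is precisely what is needed to push the upper constant below $1$ and make the conclusion a legitimate HD estimate.
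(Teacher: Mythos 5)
Your proof is correct. The paper itself gives no proof of this lemma---it is quoted directly from Tukia--V\"ais\"al\"a \cite[Lemma 3.9]{TV}---and your argument (induction for part (1), then choosing $m$ with $\eta(r_2^m)<1$ and applying quasisymmetry to the triples $(a,x,b)$ and $(a,b,x)$ to get the upper and lower constants) is essentially the standard argument from that source, including the key observation that iterating via part (1) is needed to force the upper constant below $1$.
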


\medskip
Moreover, we prove the following property.

\begin{lemma}\label{lem-2-2}
Let $D\subsetneq E $ be a domain with $(r_1,r_2)$-HD boundary and let $x\in D$. Then for all $x_0\in \partial D$ with
$|x-x_0|\leq 2d_D(x)$ there exists some point
$x_1\in \partial D$ such that
\begin{equation}\label{eq-th-ll}\frac{1}{2}d_D(x)
\leq |x_0-x_1|\leq \big(2+\frac{17}{2r_1}\big) d_D(x).
\end{equation}
\end{lemma}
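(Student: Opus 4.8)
Set $d=d_D(x)$. First I would record the elementary facts I will use repeatedly: $\mathbb{B}(x,d)\subseteq D$, and since $x_0\in\partial D$ we have $d\le |x-x_0|\le 2d$ (the left inequality because $d_D(x)$ is an infimum over $\partial D$). Note also that $\frac{1}{2r_1}\le 2+\frac{17}{2r_1}$, since this amounts to $1\le 17+4r_1$. Hence it suffices to produce a point $x_1\in\partial D$ with $\frac12 d\le |x_0-x_1|\le \frac{d}{2r_1}$; the stated (larger) upper bound then follows with room to spare, which is why the explicit constant is not delicate.

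The engine of the proof is a geometric descent driven by the HD property, with $x_0$ fixed as the base point. Starting from a boundary point $b_0$ and applying the $(r_1,r_2)$-HD condition to the pair $(x_0,b_k)$ repeatedly, I obtain points $b_1,b_2,\dots\in\partial D$ with $r_1|b_k-x_0|\le |b_{k+1}-x_0|\le r_2|b_k-x_0|$. Because $r_2<1$ the distances $|b_k-x_0|$ decrease to $0$, while consecutive distances never shrink by more than the factor $r_1$. I then let $k=\min\{j\ge 0:\ |b_j-x_0|\le \frac{d}{2r_1}\}$, which is well defined. By definition $|b_k-x_0|\le \frac{d}{2r_1}$. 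For the lower bound: if $k\ge 1$, minimality gives $|b_{k-1}-x_0|>\frac{d}{2r_1}$, so $|b_k-x_0|\ge r_1|b_{k-1}-x_0|>r_1\cdot\frac{d}{2r_1}=\frac{d}{2}$; and if $k=0$ the lower bound holds provided $b_0$ itself already satisfies $|x_0-b_0|\ge\frac12 d$. Thus $x_1:=b_k$ lands in the window $[\frac12 d,\frac{d}{2r_1}]$.

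It remains to manufacture a starting boundary point $b_0$ at a controlled, not-too-small scale. Here I would travel along the ray $\gamma(t)=x+tv$, $t\ge0$, with $v=(x-x_0)/|x-x_0|$. Since $\mathbb{B}(x,d)\subseteq D$, the ray stays in $D$ for $t<d$, so its first exit point $b_0=\gamma(t^*)$ lies in $\partial D$ for some $t^*\ge d$. Because $x_0=x-|x-x_0|\,v$, the two displacement vectors are positively aligned and the distances add, giving $|x_0-b_0|=t^*+|x-x_0|\ge d+d=2d$, which comfortably exceeds $\frac12 d$. Feeding this $b_0$ into the descent of the previous paragraph produces the desired $x_1$.

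The main obstacle is precisely this last step, namely guaranteeing the initial far boundary point. The ray argument delivers $b_0$ whenever $\gamma$ actually meets $\partial D$, which is automatic when $D$ is bounded; for unbounded $D$ the ray may remain in $D$ forever, and one must instead extract a boundary point at distance $\ge 2d$ from $x_0$ directly from the structure of the domain. I expect the HD-descent bookkeeping to be entirely routine, and essentially all of the real work to sit in securing $b_0$ at a scale comparable to $d$; the generous constant $2+\frac{17}{2r_1}$ in the statement is exactly the kind of slack that absorbs whichever explicit construction is used there.
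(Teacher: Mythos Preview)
Your iterative HD-descent is correct and is a genuinely different mechanism from the paper's: once a seed $b_0\in\partial D$ with $|b_0-x_0|\ge\tfrac12 d$ is in hand, your stopping-time argument lands in $[\tfrac12 d,\tfrac{d}{2r_1}]$, which is even sharper than the stated bound. But the gap you flag is real and is not a formality. The ray-exit construction works only when the ray actually leaves $D$; for unbounded $D$ it can fail (already for a half-space with $x_0$ the foot of the perpendicular from $x$), and ``extract a boundary point \dots\ directly from the structure of the domain'' is a hope, not an argument. So as written the proposal is incomplete precisely where you say it is.

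The paper avoids this issue by never shooting a ray. It first uses the stability of HD under powers (Lemma~\ref{pre-lem-4}) to arrange $r_2<\tfrac13$, and then runs a three-case split according to how $\partial D$ sits relative to the concentric balls $\overline{\mathbb{B}}(x,\tfrac52 d)$ and $\mathbb{B}(x,\tfrac{1}{r_1}d)$: if $\partial D$ lies entirely in the small ball, then $D$ is bounded and a point realising a third of $\diam(\partial D)$ works; if the intermediate annulus meets $\partial D$, any such point works directly; otherwise one picks $x_4\in\partial D$ just outside the big ball and uses a \emph{single} HD step (not an iteration), arguing by contradiction that $x_4$ cannot be too far --- this last case is where the constant $2+\tfrac{17}{2r_1}$ arises. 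In effect the paper's case analysis is exactly the ``securing $b_0$'' step you deferred; conversely, one could graft your descent onto that split (taking $b_0$ from the annulus in Case~II or from $\omega$ in Case~III) and obtain a complete proof with the tighter upper constant $\tfrac{1}{2r_1}$, bypassing the contradiction argument and the preliminary reduction to $r_2<\tfrac13$.
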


\begin{proof} By Lemma \ref{pre-lem-4} we may assume that
$0<r_1\leq r_2<\frac{1}{3}$. For example, if
$r_2\geq \frac{1}{3}$, then there exists a positive integer $m$ such that $r_2^m<\frac{1}{3}$. In fact we can choose $m-1$ to be the integer part of $\log_{r_2}\frac{1}{3}$, and by Lemma \ref{pre-lem-4} the  $(r_1,r_2)$-HD property of $\partial D$ implies that $\partial D$ is $(r_1^m,r_2^m)$-HD with $r_2^m<\frac{1}{3}$.

For a given $x\in D$, let $x_0\in \partial D$ be such that $|x-x_0|\leq 2d_D(x)\,.$ We divide the proof into three cases.

{\em Case I}: $\partial D\subset \overline{\mathbb{\mathbb{B}}}\big(x,\frac{5}{2}d_D(x)\big)$.

Obviously, $D$ is bounded. Let $x_1\in \partial D$ be such that $|x_0-x_1|\geq \frac{1}{3}\diam(D)$. Then
$$\frac{2}{3}d_D(x)\leq |x_0-x_1|\leq 5d_D(x),$$
which shows that $x_1$ is the desired point
and satisfies \eqref{eq-th-ll}.

{\em Case II}: $\partial D\cap  \Big(\mathbb{B}\big(x, \frac{1}{r_1}d_D(x)\big)\setminus \overline{\mathbb{B}}\big(x,\frac{5}{2}d_D(x)\big)\Big)\neq \emptyset$.

Let $x_2\in \partial D\cap \mathbb{\mathbb{B}}\big(x, \frac{1}{r_1}d_D(x)\big)\setminus \overline{\mathbb{B}}\big(x,\frac{5}{2}d_D(x)\big)$. Then $$|x_0-x_2|\geq |x_2-x|-|x-x_0|\geq \frac{1}{2}d_D(x)$$ and $$|x_0-x_2|\leq |x_0-x|+|x-x_2|\leq \big(\frac{1}{r_1}+2\big)d_D(x).$$

Obviously, $x_2$ is the needed point.

{\em Case III}: $\partial D\cap \Big( \mathbb{B}\big(x, \frac{1}{r_1}d_D(x)\big)\setminus \overline{\mathbb{B}}\big(x,\frac{5}{2}d_D(x)\big)\Big)= \emptyset$.

Let $\omega=\partial D \cap (E\setminus \mathbb{B}\big(x, \frac{1}{r_1}d_D(x))\big)$ and $d_1$ denote the distance from $\omega$ to $\mathbb{B}\big(x, \frac{1}{r_1}d_D(x)\big)$, i.e., $d_1=d\Big(\omega, \mathbb{B}\big(x, \frac{1}{r_1}d_D(x)\big)\Big)$. If $d_1=0$, let $x_3\in \omega$ be such that $d(x_3, \mathbb{B}\big(x, \frac{1}{r_1}d_D(x))\big)\leq \frac{1}{2}d_D(x)$. Hence $$(\frac{1}{r_1}-2)d_D(x)\leq |x_0-x_3|\leq |x_0-x|+|x-x_3|\leq (\frac{1}{r_1}+\frac{5}{2})d_D(x).$$ So $x_3$ is the desired point.

On the other hand, if $d_1>0$,  
let $x_4\in \omega$ be such that \begin{equation}\label{lem-2-eq1}d(x_4, \mathbb{B}\big(x, \frac{1}{r_1}d_D(x))\big)\leq \frac{3}{2}d_1.\end{equation}  We claim that  the point $x_4$ satisfies \eqref{eq-th-ll}. To see this, we first prove
\begin{equation}\label{eq-th-ss} d_1<\frac{5}{r_1}d_D(x).
\end{equation}
Suppose on the contrary that $d_1\geq \frac{5}{r_1}d_D(x).$ Then by \eqref{lem-2-eq1} there exists some point $u\in \partial D$ such that
\begin{eqnarray*}|u-x_0|&\geq& r_1|x_0-x_4|\geq r_1(|x_4-x|-|x-x_0|)\\&\geq& r_1\big(\frac{6}{r_1}-2\big)d_D(x)= (6-2r_1)d_D(x)\end{eqnarray*}
and
\begin{eqnarray*}|u-x_0|&\leq& r_2|x_0-x_4|\leq r_2(|x_0-x|+|x-x_4|)\\&\leq& r_2\big(2+\frac{1}{r_1}\big)d_D(x)+\frac{3r_2}{2}d_1\leq d_1,
\end{eqnarray*}
which shows that $u\in \partial D\cap \Big(\mathbb{B}\big(x, \frac{1}{r_1}d_D(x)\big)\setminus \overline{\mathbb{B}}\big(x,\frac{5}{2}d_D(x)\big)\Big)$. This is a contradiction. Hence \eqref{eq-th-ss} holds.

By \eqref{eq-th-ss}, we have $$\big(\frac{1}{r_1}-2\big)d_D(x)\leq|x_1-x_0|\leq \big(2+\frac{1}{r_1}\big)d_D(x)+\frac{3}{2}d_1\leq \big(2+\frac{17}{2r_1}\big)d_D(x).$$
Hence the point $x_4$ has the required properties, and so the proof of the lemma is complete.

\end{proof}

\medskip

The discussions in the case III also follows from \cite[Lemma 11.7]{H}.


\medskip

\section{Proofs of Theorems \ref{thm1.2}, \ref{thm1.3}, \ref{thm1.4} and \ref{thm1.1}} \label{sec-4}

For convenience, in the following, we always assume that $x$, $y$, $z$, $\ldots$
denote points in $D$ and $x'$, $y'$, $z'$, $\ldots$
the images in $D'$ of $x$, $y$, $z$, $\ldots$
under $f$, respectively.
\medskip
We start with some known results that are necessary for the following proofs.

\begin{lemma}\label{proof-lem-1}\;\;{\rm (\cite[Lemma 2.5]{Vai6-0})}
Suppose that $x,y\in D\neq E$ and that either $|x-y|\leq \frac{1}{2}d_D(x)$ or $k_D(x,y)\leq 1$. Then $$\frac{1}{2}\frac{|x-y|}{d_D(x)}\leq k_D(x,y)\leq 2\frac{|x-y|}{d_D(x)}.$$
\end{lemma}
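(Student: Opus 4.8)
The plan is to prove the two inequalities separately, reducing each to a one‑variable estimate in the quantity $s:=|x-y|/d_D(x)$. Throughout I would exploit exactly the two facts already recorded in the preliminaries: the chain \eqref{eq(0000)}, which furnishes lower bounds for $k_D$, and the segment estimate \eqref{vu1}, which furnishes an upper bound whenever $|x-y|\le d_D(x)$. The point of organizing things around $s$ is that both of the alternative hypotheses serve only to confine $s$ to a range on which the relevant scalar inequalities hold.

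For the lower bound, the first step is the trivial observation $\min\{d_D(x),d_D(y)\}\le d_D(x)$, which inserted into \eqref{eq(0000)} gives, with no hypothesis at all,
$$k_D(x,y)\ge j_D(x,y)\ge \log\Big(1+\frac{|x-y|}{d_D(x)}\Big)=\log(1+s).$$
It then suffices to show $\log(1+s)\ge s/2$, and for this I would first bound $s$: in the first alternative $s\le 1/2$ by assumption, while in the second alternative $j_D(x,y)\le k_D(x,y)\le 1$ forces $|x-y|/\min\{d_D(x),d_D(y)\}\le e-1$, hence $s\le e-1$. A short calculus check finishes this: the function $g(s)=\log(1+s)-s/2$ satisfies $g(0)=0$, increases on $[0,1]$ since $g'(s)=\frac1{1+s}-\frac12\ge0$ there, and decreases afterwards with $g(e-1)=1-\tfrac{e-1}{2}>0$, so $g\ge0$ on $[0,e-1]$, covering both cases.

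For the upper bound I would split on the size of $s$. If $|x-y|\le\tfrac12 d_D(x)$ (in particular $|x-y|\le d_D(x)$), then \eqref{vu1} applies and yields
$$k_D(x,y)\le \log\frac{d_D(x)}{d_D(x)-|x-y|}=-\log(1-s)\le \frac{s}{1-s}\le 2s,$$
the middle inequality being the termwise comparison of the series for $-\log(1-s)$ and $s/(1-s)$, and the last step using $s\le 1/2$. If instead $|x-y|>\tfrac12 d_D(x)$, then the first alternative fails, so the second must hold, i.e. $k_D(x,y)\le 1$; but then $2s=2|x-y|/d_D(x)>1\ge k_D(x,y)$, so the bound holds trivially.

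The only genuinely delicate point is the upper bound under the hypothesis $k_D(x,y)\le 1$ alone: this does \emph{not} force $|x-y|\le\tfrac12 d_D(x)$ (it only gives $s\le e-1$), so the segment estimate \eqref{vu1} need not be available. The resolution is precisely the elementary case split above, where in the regime $s\ge 1/2$ the target $2s\ge1$ already dominates $k_D(x,y)$; everything else is routine one‑variable analysis.
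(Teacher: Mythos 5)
Your proposal is correct, and all the numerical details check out: the lower bound follows from $k_D\geq j_D\geq\log(1+s)$ with $s=|x-y|/d_D(x)$ together with the scalar inequality $\log(1+s)\geq s/2$ on $[0,e-1]$ (your monotonicity check is fine; concavity of $\log(1+s)-s/2$ with nonnegative values at both endpoints would do it even faster), and the upper bound follows from \eqref{vu1} via $-\log(1-s)\leq s/(1-s)\leq 2s$ when $s\leq 1/2$, with the regime $s>1/2$ dispatched by the observation that then the hypothesis forces $k_D(x,y)\leq 1<2s$. Note, however, that the paper itself contains no proof of this statement: it is quoted verbatim from V\"ais\"al\"a \cite[Lemma 2.5]{Vai6-0}, so there is no internal argument to compare against. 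Your derivation is a legitimate self-contained substitute built entirely from the two estimates the paper does record, \eqref{eq(0000)} and \eqref{vu1}, and it is in the same elementary spirit as V\"ais\"al\"a's original argument (which likewise rests on the standard comparisons between $k_D$, $j_D$, and the straight-segment bound). You also correctly isolated the one genuinely delicate point, namely that $k_D(x,y)\leq 1$ alone does not put you in the range where \eqref{vu1} applies, and your case split at $s=1/2$ is exactly the right way to close that gap.
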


\medskip

\begin{lemma}\label{proof-lem-2}\;\;{\rm (\cite[Lemma 2.6]{Vai6-0})}
Suppose that $X$ is connected, that $f:X\to Y$ is $\eta$-quasisymmetric, and that $A\subset X$ is bounded. Then $f|_A$ satisfies a two-sided H\"older condition $$|x-y|^{{1}/{\alpha}}/M\leq |fx-fy|\leq M|x-y|^{\alpha}\;\;\;\;\; for \; x,y \in A,$$ where $\alpha=\alpha(\eta)\leq1$ and $M=M(\eta, d(A),d(fA))\geq 1.$
\end{lemma}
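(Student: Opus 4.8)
The plan is to run the classical chaining argument for quasisymmetric maps: I would first extract a one-sided power-type distortion estimate from the $\eta$-$QS$ condition, then deduce the upper Hölder bound by comparing $|x-y|$ against a reference distance of size $\sim d(A)$, and finally obtain the lower bound by applying the upper bound to the inverse map.

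The core step is a \emph{power bound}: there exist $H\ge 1$ and $\alpha\in(0,1]$, depending only on $\eta$, such that for all $x,a,b\in X$ with $0<|x-a|\le|x-b|$,
$$\frac{|fx-fa|}{|fx-fb|}\le H\Big(\frac{|x-a|}{|x-b|}\Big)^{\alpha}.$$
The decisive ingredient is connectedness of $X$: for fixed $x$ the function $w\mapsto|x-w|$ is continuous, so its image over the connected set $X$ is an interval containing $0$ and $|x-b|$, whence every value in $[\,|x-a|,|x-b|\,]$ is attained. Since $\eta$ is a homeomorphism with $\eta(0)=0$, I fix once and for all a ratio $q\in(0,1)$ with $\lambda:=\eta(q)<1$. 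Writing $t=|x-a|$, $s=|x-b|$, I choose $n$ with $tq^{-n}\le s<tq^{-(n+1)}$ and interpolate points $a=w_0,w_1,\dots,w_n,w_{n+1}=b$ with $|x-w_i|=tq^{-i}$ for $i\le n$ and $|x-w_{n+1}|=s$. Applying the $\eta$-$QS$ condition to the triple $(x,w_i,w_{i+1})$ gives $|fx-fw_i|/|fx-fw_{i+1}|\le\eta(q)=\lambda$ for $i<n$ and $\le\eta(1)$ for the final step, so the telescoping product yields the displayed estimate with $\alpha=\log(1/\lambda)/\log(1/q)$ and $H=\eta(1)/\lambda$ (shrinking $\alpha$ if needed so that $\alpha\le1$).

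Next I would deduce the upper Hölder inequality. For $x,y\in A$, the definition of the diameter $d(A)$ supplies a point $b\in A$ (nearly extremal) with $|x-b|\ge d(A)/2$, and then $|fx-fb|\le d(fA)$. When $|x-y|\le d(A)/2$ the power bound applied to $(x,y,b)$ gives
$$|fx-fy|\le|fx-fb|\,H\Big(\frac{|x-y|}{|x-b|}\Big)^{\alpha}\le H\,d(fA)\,\Big(\frac{2}{d(A)}\Big)^{\alpha}|x-y|^{\alpha},$$
while the range $|x-y|>d(A)/2$ is absorbed into the constant, since there $|x-y|^{\alpha}\ge(d(A)/2)^{\alpha}$ and $|fx-fy|\le d(fA)$. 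This proves $|fx-fy|\le M|x-y|^{\alpha}$ with $M=M(\eta,d(A),d(fA))$.

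Finally, for the lower bound I would apply the same reasoning to $f^{-1}$. Because $f$ is an $\eta$-$QS$ embedding, $f^{-1}\colon fX\to X$ is $\eta'$-$QS$ with $\eta'$ determined by $\eta$ (a standard fact of quasisymmetry theory, as in \cite{TV}), and $fX$ is connected. The upper Hölder estimate for $f^{-1}$ on $fA$ gives $|x-y|\le M'|fx-fy|^{\alpha'}$ for $x,y\in A$, that is $|fx-fy|\ge(|x-y|/M')^{1/\alpha'}$. Replacing $\alpha$ by the minimum of the two exponents and enlarging $M$ (which costs only a factor depending on $d(A)$, since $A$ is bounded) recasts both inequalities in the stated form. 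The only delicate points are the use of connectedness to produce the interpolating points $w_i$ and the choice of $q$ forcing geometric decay despite the fact that $\eta(1)\ge1$ rules out the naive halving; the remainder is bookkeeping of constants.
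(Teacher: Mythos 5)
Your proof is correct, but note that the paper contains no proof of this lemma at all: it is quoted verbatim from V\"ais\"al\"a \cite[Lemma 2.6]{Vai6-0}, and the argument behind that citation is precisely the Tukia--V\"ais\"al\"a power-quasisymmetry machinery \cite{TV} that you have reconstructed — connectedness to realize every intermediate distance $tq^{-i}$, a ratio $q$ with $\eta(q)<1$ to force geometric decay, telescoping to the power bound $\rho(fT)\le H\,\rho(T)^{\alpha}$, anchoring at a point $b$ with $|x-b|$ comparable to $d(A)$, and passing to $f^{-1}$ (which is $\eta'$-QS with $\eta'$ depending only on $\eta$) for the lower bound, then taking the smaller exponent at the cost of a factor bounded by a power of $\max(1,d(A))$. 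Two cosmetic points only: a nearly diametral pair gives you $b$ with $|x-b|\ge d(A)/3$ rather than exactly $d(A)/2$ (harmless, it only changes the constant), and you should dispatch the degenerate cases $x=y$ and $d(A)=0$ explicitly; neither affects the validity of the argument.
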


\medskip

\mysubsection{\bf The proof of Theorem \ref{thm1.2}}


Since $f:\partial D\to \partial D'$ is $L$-bilipschitz, we know that the boundedness of $D$ (resp. $D'$) implies the boundedness of $D'$ (resp. $D$). In fact, suppose on the contrary that $D$ is bounded and $D'$ is unbounded. Then let $w_1', w_2'\in \partial D'$ such that $|w_1'-w_2'|\geq 4L \diam(D)$. Then we have
$$\diam(D)\geq \frac{1}{2}|w_1-w_2|\geq \frac{1}{2L}|w_1'-w_2'|\geq 2\diam(D),$$
which is a contradiction.

If $D$ is unbounded, then $\infty\in \partial D$, by auxiliary inversions we normalize the situation such that $f(\infty)=\infty.$ Hence by Lemma \ref{pre-lem-3}, $f$ is $\eta$-QS in $\overline{D}$ with $\eta$ depending on $c$, $L$ and $\varphi$.

In the following, we assume that $D$ is bounded. Then \begin{equation}\label{thm-1-4}\frac{1}{4L}\diam(D)\leq\diam(D')\leq 4L\diam(D).\end{equation}
Let $z_1, z_2\in \partial D$ be such that $|z_1-z_2|\geq \frac{1}{2}\diam(D)$ and let $z_3'\in \partial D' $ be such that $$\min\{|z_1'-z_3'|,|z_2'-z_3'|\}\geq \frac{1}{6}\diam(D').$$ Then by \eqref{thm-1-4}, we have $$|z_1'-z_2'|\geq \frac{1}{L}|z_1-z_2|\geq \frac{1}{2}\diam(D)\geq \frac{1}{8L^2}\diam(D')$$ and
$$\min\{|z_1-z_3|,|z_2-z_3|\}\geq\frac{1}{L}\min\{|z_1'-z_3'|,|z_2'-z_3'|\}\geq\frac{1}{24L^2}\diam(D),$$
which, in combination with Lemma \ref{pre-lem-2} and Lemma \ref{pre-lem-3}, shows that $f$ is $\eta$-QS in $\overline{D}$ with $\eta$ depending on $c$, $L$ and $\varphi$.\qed

\mysubsection{\bf The proof of Theorem \ref{thm1.3}}

The proof of Theorem \ref{thm1.3} easily follows from Theorem \ref{thm1.2} and Lemma \ref{proof-lem-2}.\qed
\medskip

\medskip


%


\medskip

In the remaining part of this paper, we always
assume that $D$ and $D'$ are $c$-uniform subdomains  in $E$ and $E'$, respectively,  that the boundary of $D$ is $(r_1,r_2)$-homogeneously dense, that $f: D\to D'$ is a
$\varphi$-FQC map, and that $f$ extends to a homeomorphism $\overline{f}:
\overline{D}\to \overline{D'}$ such that $\overline{f}:\partial
D\to \partial D'$ is $L$-bilipschitz.\medskip

We first show that 
the following lemma holds.

\begin{lemma}\label{lem-1}
There is a constant $M_1=M_1(c,L,\varphi,r_1,r_2)$ such that for given $x\in D$ the following hold:\\
$(1)$ For $x_0\in \partial D$ with $|x-x_0|\leq 2d_D(x)$, we have $$|x_0'-x'|\leq
M_1d_{D}(x).$$
$(2)$ For all $x_1\in \partial D$, we have \begin{equation}\label{eq-lem-ls}\frac{1}{2(2L+M_1)}|x_1-x|\leq|x'_1-x'|\leq 2(2L+M_1)|x_1-x|.\end{equation}
\end{lemma}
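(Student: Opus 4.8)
The plan is to establish part (1) from the quasisymmetry of $f$ on $\overline{D}$ (Theorem \ref{thm1.2}) combined with Lemma \ref{lem-2-2}, and then to derive the two-sided estimate of part (2) from part (1): the upper bound by a direct triangle-inequality computation, and the lower bound by applying that same upper bound to the inverse map $f^{-1}$.

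For (1), I would first recall that by Theorem \ref{thm1.2} the map $f$ is $\eta$-QS on $\overline{D}$ with $\eta=\eta(c,L,\varphi)$. Fix $x\in D$ and $x_0\in\partial D$ with $|x-x_0|\le 2d_D(x)$. Lemma \ref{lem-2-2} then furnishes a point $w\in\partial D$ with $\tfrac12 d_D(x)\le |x_0-w|\le \big(2+\tfrac{17}{2r_1}\big)d_D(x)$. Applying $\eta$-quasisymmetry to the triple $T=(x_0,x,w)$, whose ratio satisfies $\rho(T)=|x-x_0|/|x_0-w|\le 4$, gives $|x'-x_0'|\le \eta(4)\,|x_0'-w'|$. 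Since $f|_{\partial D}$ is $L$-bilipschitz and $w,x_0\in\partial D$, we have $|x_0'-w'|\le L|x_0-w|\le L\big(2+\tfrac{17}{2r_1}\big)d_D(x)$, whence
$$|x'-x_0'|\le \eta(4)\,L\big(2+\tfrac{17}{2r_1}\big)d_D(x)=:M_1\,d_D(x),$$
a constant of the required type.

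For the upper bound in (2), pick $x_0\in\partial D$ with $|x-x_0|\le 2d_D(x)$ and note that $d_D(x)\le|x_1-x|$ because $x_1\in\partial D$. Then $|x_1-x_0|\le |x_1-x|+|x-x_0|\le 3|x_1-x|$, so part (1) and the bilipschitz bound give
$$|x_1'-x'|\le |x_1'-x_0'|+|x_0'-x'|\le L|x_1-x_0|+M_1\,d_D(x)\le (3L+M_1)|x_1-x|\le 2(2L+M_1)|x_1-x|.$$
For the lower bound I would exploit the symmetry of the hypotheses. Because $f|_{\partial D}$ is $L$-bilipschitz it is quasisymmetric, so by Lemma \ref{pre-lem-4}(2) the set $\partial D'=f(\partial D)$ is again homogeneously dense; since $f^{-1}\colon D'\to D$ is itself $\varphi$-FQC with $L$-bilipschitz boundary values, it satisfies all the standing hypotheses. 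Hence the upper bound just proved applies verbatim to $f^{-1}$ (after enlarging $M_1$, if necessary, to dominate the analogous constant for $f^{-1}$, which still depends only on $c,L,\varphi,r_1,r_2$). Taking $y'=x'\in D'$ and $y_1'=x_1'\in\partial D'$ in that estimate yields $|x_1-x|\le 2(2L+M_1)|x_1'-x'|$, which is exactly the asserted lower bound.

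The main obstacle is the lower bound in (2): the naive reverse–triangle estimate $|x_1'-x'|\ge \tfrac1L|x_1-x_0|-M_1\,d_D(x)$ is effective only when $x_1$ is far from the near-nearest boundary point $x_0$, and degenerates when $|x_1-x_0|$ is comparable to $d_D(x)$, so one is tempted into an awkward case split that does not recover the stated constant. The device that circumvents this — and reproduces the precise factor $2(2L+M_1)$ — is the observation that $f^{-1}$ falls under the same hypotheses, the only point needing care being that $\partial D'$ inherits the HD property through Lemma \ref{pre-lem-4}(2); the lower bound for $f$ is then read off the upper bound for $f^{-1}$.
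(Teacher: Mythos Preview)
Your argument is correct, but your route to part (1) differs from the paper's. You invoke Theorem~\ref{thm1.2} to get $\eta$-quasisymmetry of $f$ on $\overline{D}$ and then apply it directly to the triple $(x_0,x,w)$, where $w$ is the auxiliary boundary point from Lemma~\ref{lem-2-2}; this is short and clean. The paper instead works only with the quasim\"obius property of $f^{-1}$ from Lemma~\ref{pre-lem-3}: it introduces an interior point $x_2\in[x_0,x]$ with $|x-x_2|=\tfrac12 d_D(x)$, uses the $\varphi$-FQC condition to bound $|x_2'-x'|$ in terms of $|x_2'-x_0'|$, and then feeds the quadruple $(x_3,x_0,x_2,x)$ (with $x_3$ playing the role of your $w$) into the quasim\"obius estimate. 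Your approach buys brevity at the cost of relying on the already-proved Theorem~\ref{thm1.2}; the paper's approach is more self-contained, using only the QM extension lemma rather than the stronger QS conclusion. For part (2) the two arguments are essentially the same in spirit---upper bound by triangle inequality through a near-boundary point, lower bound by symmetry via $f^{-1}$ and Lemma~\ref{pre-lem-4}(2)---though the paper carries out a small case split ($|y_1-x_1|\le 2|x-y_1|$ versus $>$) that your single estimate $|x_1-x_0|\le 3|x_1-x|$ neatly avoids.
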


\begin{proof} We first prove $(1)$.

 For a fixed $x\in D$, let $x_0\in \partial D$ be such that $|x-x_0|\leq 2d_D(x)$.
Let $x_2$ be
the intersection point of $\mathbb{S}(x, \frac{1}{2}d_{D}(x))$
with $[x_0, x]$. Then by \eqref{vu1} we have
$$k_{D}(x_2,x)\leq
\log\Big(1+\frac{|x-x_2|}{d_{D}(x)-|x-x_2|}\Big)=\log 2,$$ which implies that

$$\log\frac{|x'_2-x'|}{|x'_2-x'_0|}\leq k_{D'}(x'_2,x')\leq \varphi(k_{D}(x_2,x))=\varphi(\log 2).$$ Hence
\begin{equation}\label{lem-1-0}|x'_2-x'|\leq e^{\varphi(\log 2)} |x'_2-x'_0|,\end{equation} \noindent and so
\begin{equation}\label{lem-1-1}|x'-x'_0|\leq|x'-x'_2|+|x'_2-x'_0|
\leq(e^{\varphi(\log 2)}+1)|x'_2-x'_0|.\end{equation}

Since $\partial D$ is $(r_1,r_2)$-HD,  we see from Lemma \ref{lem-2-2} that there must exist some point $x_3\in  \partial D$ such that \begin{equation}\label{lem-1-16}\frac{1}{2}d_D(x)\leq |x_3-x_0|\leq \big(2+\frac{17}{2r_1}\big)d_D(x).\end{equation} Hence

\begin{equation}\label{lem-1-6'}|x-x_3|\leq |x-x_0|+|x_0-x_3|\leq
\big(4+\frac{17}{2r_1}\big)d_D(x)\end{equation}

\noindent and
\begin{equation}\label{lem-1-6}\frac{1}{2L}d_{D}(x)\leq
\frac{1}{L}|x_3-x_0|\leq |x'_3-x'_0|\leq L|x_3-x_0|\leq L\big(2+\frac{17}{2r_1}\big)d_{D}(x).\end{equation}
By  Lemma \ref{pre-lem-3} we see that $f^{-1}$ is
$\theta$-quasim\"obius in $\overline{D}$, where
$\theta=\theta(c,\varphi)$. It follows from (\ref{lem-1-0}),   (\ref{lem-1-16}), (\ref{lem-1-6'}) and
 (\ref{lem-1-6}) that
\begin{eqnarray*}\frac{1}{6\big(4+\frac{17}{2r_1}\big)}&\leq&\frac{|x_3-x_0|}{|x_2-x_0|}\cdot\frac{|x_2-x|}{|x-x_3|}\leq
\theta
\Big(\frac{|x'_3-x'_0|}{|x'_2-x'_0|}\cdot\frac{|x'_2-x'|}{|x'-x'_3|}\Big)\\&\leq&
\theta\Big(\frac{L\big(2+\frac{17}{2r_1}\big)e^{\varphi(\log 2)}d_D(x)}{|x'-x'_3|}\Big) ,\end{eqnarray*}  which,
together with (\ref{lem-1-1}), shows that
\begin{eqnarray*}|x'-x'_0|&\leq& |x'-x'_3|+|x'_3-x'_0|\leq (\lambda L\big(2+\frac{17}{2r_1}\big)e^{\varphi(\log 2)}+1)d_D(x)\\&\leq & 2\lambda L\big(2+\frac{17}{2r_1}\big)e^{\varphi(\log 2)}d_D(x),\end{eqnarray*} where  $\lambda={1}/{\theta^{-1}(\frac{1}{6\big(4+\frac{17}{2r_1}\big)})}$. 
Thus the proof of $(1)$ is complete by taking $M_1=2\lambda L\big(2+\frac{17}{2r_1}\big)e^{\varphi(\log 2)}$.

Now we are going to prove $(2)$.

We first observe that $f:\partial D\to \partial D'$ is $\eta$-QS with $\eta(t)=L^2t$. Hence Lemma \ref{pre-lem-4} shows that $\partial D'$ is $(\lambda_1,\lambda_2)$-HD with $\lambda_1, \lambda_2$ depending only on $L$, $r_1$ and $r_2.$
Since $f^{-1}$ is also a $\varphi$-FQC map, it is easily seen that  we only need to prove the right hand  side of \eqref{eq-lem-ls}.
For $x\in D$, we let $y_1\in
\partial D$ be such that \begin{equation}\label{lem-2-13}|x-y_1|\leq 2d_D(x).\end{equation}   Then it follows from Lemma \ref{lem-1} $(1)$ that
\begin{equation}\label{lem-2-7}|x'-y'_1|\leq M_1d_D(x)\leq M_1|x-y_1|.\end{equation}

For $x_1\in \partial D$, on one hand, if $|y_1-x_1|\leq 2 |x-y_1|$, then by \eqref{lem-2-13},
\begin{eqnarray*} |x'-x'_1|&\leq&
|x'-y'_1|+|y'_1-x'_1|\leq M_1|x-y_1|+L|y_1-x_1|\\ \nonumber &\leq&(2L+M_1)|x-y_1|\leq 2(2L+M_1)d_D(x)\\
\nonumber &\leq& 2(2L+M_1)|x-x_1|.\end{eqnarray*}

On the other hand, if $|y_1-x_1|>2|x- y_1|$, then we have
$$|x-x_1|>|y_1-x_1|-|x-y_1|>\frac{1}{2}|y_1-x_1|,$$ which, together with
(\ref{lem-2-7}), shows that \begin{eqnarray*} |x'-x'_1|&\leq&
|x'-y'_1|+|y'_1-x'_1|\leq M_1|x-y_1|+L|y_1-x_1|\\ \nonumber &\leq& 2M_1d_D(x)+2L|x-x_1|\leq 2(L+M_1)|x-x_1|.\end{eqnarray*} Hence the proof of \eqref{eq-lem-ls} is complete.\end{proof}

\medskip
\mysubsection{\bf The proof of Theorem \ref{thm1.4}}

Supposing that $f \in QC^L_{\varphi}(D,D')$  is $M$-QH, we show that $f$ is $M'$-bilipschitz from $\overline{D}$ to $\overline{D}'$. Lemma \ref{pre-lem-4} yields that $\partial D'$ is $(\lambda_1,\lambda_2)$-HD with $\lambda_1, \lambda_2$ depending only on $L$, $r_1$ and $r_2$. Then by Lemma \ref{lem-1} and the fact that $``f^{-1}$ is also $M$-QH and a $M$-QH map is a $\varphi$-FQC map with $\varphi(t)=Mt$" we know that it suffices to show that for all $z_1,z_2\in D$, the following holds:
\begin{equation}\label{thm-1-2}|z_1'-z_2'|\leq M'|z_1-z_2|.\end{equation}

Fix $z_1,z_2\in D\,.$ Without loss of generality, we may assume that $$\max\{d_D(z_1),d_D(z_2)\}=d_D(z_1).$$

Consider first the case $|z_1-z_2|\leq \frac{1}{2M}d_D(z_1)\,.$ Then by Lemma \ref{proof-lem-1}, $$k_{D'}(z_1',z_2')\leq M k_D(z_1,z_2)\leq 2M \frac{|z_1-z_2|}{d_D(z_1)}\leq 1,$$ which shows that $$\frac{1}{2}\frac{|z_1'-z_2'|}{d_{D'}(z_1')}\leq k_D(z_1',z_2')\leq M k_D(z_1,z_2)\leq 2M\frac{|z_1-z_2|}{d_D(z_1)}.$$ Hence Lemma \ref{lem-1} shows that
\begin{equation}\label{thm-1-2-proof1}|z_1'-z_2'|\leq 4M\frac{|z_1-z_2|}{d_D(z_1)}d_{D'}(z_1')\leq 4MM_1|z_1-z_2|. \end{equation}

Next we consider the case $|z_1-z_2|> \frac{1}{2M}d_D(z_1)$. We let $z\in \partial D$ be such that $|z_1-z|\leq 2 d_D(z_1)$. If $|z_1-z|\leq \frac{1}{2}|z_2-z|$, then $$|z_1-z_2|\geq |z_2-z|-|z_1-z|\geq \frac{1}{2}|z_2-z|,$$ and so Lemma \ref{lem-1}  yields
\begin{equation}\label{thm-1-2-proof2}|z_1'-z_2'|\leq |z_1'-z'|+|z_2'-z'|\leq M_1 d_D(z_1)+ 2(2L+M_1)|z_2-z|\end{equation}\begin{eqnarray*}\leq 2(MM_1+4L+2M_1)|z_1-z_2|.\end{eqnarray*}
On the other hand, if  $|z_1-z|\geq \frac{1}{2}|z_2-z|$, then by Lemma \ref{lem-1} we have  \begin{equation}\label{thm-1-2-proof3}|z_1'-z_2'|\leq |z_1'-z'|+|z_2'-z'|\leq M_1 d_D(z_1)+ 2(2L+M_1)|z_2-z|\end{equation}\begin{eqnarray*}\leq M_1 d_D(z_1)+ 4(2L+M_1)|z_1-z|\leq 2M(9M_1+16L)|z_1-z_2|.\end{eqnarray*} By taking $M'= 2M(9M_1+16L)$ we see from \eqref{thm-1-2-proof1}, \eqref{thm-1-2-proof2} and \eqref{thm-1-2-proof3} that \eqref{thm-1-2} holds. Hence the proof of Theorem \ref{thm1.4} is complete.\qed

\medskip

\begin{remark}\label{remark}\begin{enumerate}
\item In Theorem \ref{thm1.4}, the hypothesis ``$f$ is FQC" alone does not imply the conclusion ``$f$ is bilipschitz". As an example, we consider the radial power map $f_{\alpha}: \mathbb{B}\to \mathbb{B}$ with $f_{\alpha}(x)=|x|^{\alpha-1}x$ and $\alpha\geq 1$. By \cite[6.5]{Vai6-0} we see that $f_{\alpha}$ is a FQC map  and $f_{\alpha}|_{\partial \mathbb{B}}$ is the identity on the boundary,  but $f_{\alpha}$ is not bilipschitz (see \cite[6.8]{Vai5}).
    \medskip
\item If the boundary of $D$ is not HD, then ``$f$ being QH" does not always imply that ``$f$ is bilipschitz". We still consider the radial power map $f_{\alpha}: E\setminus\{0\}\to E\setminus\{0\}$ with $f_{\alpha}(x)=|x|^{\alpha-1}x$ and $\alpha\geq 1$. On one hand, the domain $E\setminus\{0\}$ has only two boundary components: $\{0\}$ and $\{\infty\}$, and so the boundary is not HD. On the other hand, $f$ is $\alpha$-QH (see \cite[5.21]{Vai5}) and it is the identity on the boundary. But it is not bilipschitz.
    \medskip
%
%

\end{enumerate}
\end{remark}
  \medskip
%
%

\medskip
\mysubsection{\bf The proof of Theorem \ref{thm1.1}}

Given $x\in D=D'$, let $z'\in \partial D'$ be such that $d_{D'}(x')\geq \frac{1}{2}|x'-z'|$. Then Lemma \ref{lem-1} yields
$$d_{D'}(x')\geq \frac{1}{4(2L+M_1)}|x-z|\geq \frac{1}{4(2L+M_1)}d_D(x).$$

\noindent Let $z_1\in \partial D$ be such that $|x-z_1|\leq 2 d_D(x)$. Then it follows from Lemma \ref{lem-1} that $$|x-x'|\leq |x-z_1|+|x'-z_1|\leq (2+M_1)d_D(x).$$

\noindent Hence by Lemma \ref{pre-lem-1} we see that $$k_D(x,x')\leq c'\log\Big(1+\frac{|x-x'|}{\min\{d_D(x),d_D(x')\}}\Big)\leq c'\log\big(1+4(2+M_1)(2L+M_1)\big).$$\qed

\medskip

{\bf Acknowledgement.} This research was finished when the first author was an academic visitor
in Turku University and the first author was supported by the Academy of Finland grant of Matti Vuorinen with the
Project number 2600066611. She thanks
Department of Mathematics in Turku University for hospitality.

\end{document}